\numberwithin{equation}{section}
\newtheorem{theorem}{Theorem}[section]
\newtheorem{proposition}[theorem]{Proposition}
\newtheorem{lemma}[theorem]{Lemma}
\newtheorem{conjecture}[theorem]{Conjecture}
\newtheorem{question}[theorem]{Question}
\definecolor{2purple}{RGB}{204,102,255}
\definecolor{3green}{RGB}{0,204,0}
\newtheorem{definition}[theorem]{Definition}
\theoremstyle{definition}
\newcommand{\symm}{{\mathfrak{S}}}
\newcommand{\CC}{{\mathbb {C}}}
\begin{document}

\title[A skein action embedding]{An embedding of the skein action on set partitions into the skein action on matchings}

\author{Jesse Kim}
\address
{Department of Mathematics \newline \indent
University of California, San Diego \newline \indent
La Jolla, CA, 92093-0112, USA}
\email{jvkim@ucsd.edu}

\begin{abstract}
Rhoades defined a skein action of the symmetric group on noncrossing set partitions which generalized an action of the symmetric group on matchings. The $\symm_n$-action on matchings is made possible via the Ptolemy relation, while the action on set partitions is defined in terms of a set of skein relations that generalize the Ptolemy relation. The skein action on noncrossing set partitions has seen applications to coinvariant theory and  coordinate rings of partial flag varieties. In this paper, we will show how Rhoades' $\symm_n$-module can be embedded into the $\symm_n$-module generated by matchings, thereby explaining how Rhoades' generalized skein relations all arise from the Ptolemy relation.
\end{abstract}

\maketitle

\section{Introduction}
This paper concerns an action of $\symm_n$ on the vector space spanned by the set of noncrossing set partitions of $[n] := \{1,2,\dots, n\}$ first defined by Rhoades \cite{Rhoades}. This action is defined in terms of three {\em skein relations}, the simplest of which is the {\em Ptolemy relation} shown below. 
\begin{center}

\begin{tikzpicture}[scale=0.5]

    \newcommand{\squa}
        {
        \foreach \i in {1,...,4}
                {
                \coordinate (p\i) at (90*\i + 45:\r);
                \filldraw(p\i) circle (\pointradius pt);
                }
        }
            
    \def\r{1}           % pentagon radius
    \def\pointradius{3} % points radius

    \foreach \a/\b/\c/\d [count=\shft from 0] in   
       {1/3/2/4,
        1/2/3/4,
        1/4/2/3}
        {
        \begin{scope}[xshift=100*\shft,yshift=0]
            \squa
            \foreach \i in {1,...,4}
                \draw[thick] (p\a) -- (p\b);
                \draw[thick] (p\c) -- (p\d);
            \squa
        \end{scope}
        }

	\node at (1.8,0) {$\mapsto$};
	\node at (5.2,0) {$+$};

    \end{tikzpicture}
\end{center}
The original motivation for defining this action was to give an algebraic proof of certain cyclic sieving results on noncrossing set partitions, first proven by Reiner, Stanton, and White \cite{RSW} and Pechenik \cite{P} via direct enumeration. Rhoades' action has since been found within coinvariant rings and coordinate rings of certain partial flag varieties\cite{me,PPS}, strengthening the claim that it is an action worth studying in its own right. Rhoades' action generalizes a similar action on the vector space with basis given by noncrossing matchings. This paper will show how Rhoades' action can be found within the action on noncrossing matchings and thereby explain how the three skein relations all arise from the Ptolemy relation.

More precisely, let $M(n)$ denote the set of all {\em matchings} of $[n]$, i.e. collections of disjoint size-two subsets of $[n]$. The symmetric group $\symm_n$ acts naturally on $M(n)$ as follows. If $\sigma \in \symm_n$ and $m = \{\{a_1,b_1\}, \dots, \{a_k, b_k\}\}$ is a matching, then
\begin{equation}
\sigma \star m = \{ \{\sigma(a_1), \sigma(b_1)\}, \dots, \{\sigma(a_k), \sigma(b_k)\}\}.
\end{equation}
We can extend this action to an action on $\mathbb{C}[M(n)]$, the $\mathbb{C}$-vector space with basis given by matchings of $[n]$. For our purposes, it will be more useful to consider a sign-twisted version of this action, where
\begin{equation}
\sigma \circ m = \mathrm{sign}(\sigma)\{ \{\sigma(a_1), \sigma(b_1)\}, \dots, \{\sigma(a_k), \sigma(b_k)\}\}.
\end{equation}

A matching is {\em noncrossing} if it does not contain two subsets $\{a,c\}$ and $\{b,d\}$ with $a<b<c<d$. The action on matchings does not descend to an action on $NCM(n)$, the set of all noncrosssing matchings of $[n]$, since permuting elements in a noncrossing matching could introduce crossings. However, we can linearize and define an action on $\mathbb{C}[NCM(n)]$, the $\mathbb{C}$-vector space with basis given by noncrossing matchings of
$[n]$. For any noncrossing matching $m$ and adjacent transposition $s_i = (i,i+1)$, define
\begin{equation}
s_i \cdot m = \begin{cases} s_i \circ m & s_i \circ m \textrm{ is noncrossing}\\ m + m' & \textrm{otherwise} \end{cases}.
\end{equation}
Here $\circ$ denotes the action on all matchings and $m'$ is the matching where the subsets of $m$ containing $i$ and $i+1$, call them $\{i,a\}$ and $\{i+1,b\}$ have been replaced with $\{i,i+1\}$ and $\{a,b\}$ and all other subsets remain the same.  In other words, $s_i \circ m$, $m$, and $m'$ form a trio of matchings that differ only in a Ptolemy relation. There exists an $\symm_n$-equivariant linear projection $p_M : \mathbb{C}[M(n)] \rightarrow \mathbb{C}[NCM(n)]$ given for any matching $m$ by 
\begin{equation}
m \mapsto w^{-1} \cdot (w \circ m),
\end{equation}
where $w$ is any permutation for which $w \circ m$ is noncrossing. The kernel of this projection is spanned by sums of matchings which differ only by a Ptolemy relation, i.e.
\begin{multline}
\{\{a_1,a_2\},\{a_3,a_4\},\{a_5,a_6\}, \dots, \{a_{2k-1}, a_{2k}\}\} \\+ \{\{a_1,a_3\},\{a_2,a_4\},\{a_5,a_6\}, \dots, \{a_{2k-1}, a_{2k}\}\} \\+ \{\{a_1,a_4\},\{a_2,a_3\},\{a_5,a_6\}, \dots, \{a_{2k-1}, a_{2k}\}\}
\end{multline}
%\begin{align*}
%&\{\{a_1,a_2\},\{a_3,a_4\},\{a_5,a_6\}, \dots, \{a_{2k-1}, a_{2k}\}\}\\ +& \{\{a_1,a_3\},\{a_2,a_4\},\{a_5,a_6\}, \dots, \{a_{2k-1}, a_{2k}\}\} \\+& \{\{a_1,a_4\},\{a_2,a_3\},\{a_5,a_6\}, \dots, \{a_{2k-1}, a_{2k}\}\}
%\end{align*}
for any distinct $a_1, \dots, a_{2k} \in [n]$. This projection can be thought of as a way to``resolve" crossings in a matching and obtain a sum of noncrossing matchings.

Analogously, let $\Pi(n)$ denote the set of all set partitions of $[n]$, and let $\mathbb{C}[\Pi(n)]$ be the $\mathbb{C}$-vector space with basis given by $\Pi(n)$. We can define an action of $\symm_n$ on $\mathbb{C}[\Pi(n)]$ analogous to the action on $\mathbb{C}[M(n)]$. A set partition is {\em noncrossing} if there do not exist distinct blocks $A$ and $B$ and elements $a,c \in A$, $b,d \in B$ with $a<b<c<d$. Let $NCP(n)$ denote the set of all noncrossing set partitions, and let $\mathbb{C}[NCP(n)]$ be the corresponding vector space. Rhoades \cite{Rhoades} defined an action of $\symm_n$ on $\mathbb{C}[NCP(n)]$ as follows. For any noncrossing set partition $\pi$ and adjacent transposition $s_i$,
\[
s_i \cdot \pi = \begin{cases} -\pi & i \textrm{ and } i+1 \textrm{ are in the same block of }\pi\\ -\pi'  & \textrm{at least one of } i \textrm{ and } i+1 \textrm{ is in a singleton block of } \pi \\ \sigma(\pi') & i \textrm{ and } i+1 \textrm{ are in different size 2 or larger blocks of }\pi \end{cases}
\]
where $\pi'$ is the set partition obtained by swapping which blocks $i$ and $i+1$ are in, and $\sigma$ is defined for any almost-noncrossing (i.e. the crossing can be removed by a single adjacent transposition) partition $\pi$ by $\sigma(\pi) = \pi + \pi_2 - \pi_3 - \pi_4$ where, if the crossing blocks in $\sigma$ are $\{i, a_1, \dots, a_k\}$ and $\{i+1, b_1, \dots, b_l\}$, then $\pi_2, \pi_3$ and $\pi_4$ are obtained from $\pi$ by replacing these blocks with
\begin{itemize}
\item $\{i, i+1\}$ and $\{a_1, \dots, a_k, b_1, \dots, b_l\}$ for $\pi_2$\\
\item $\{i,i+1, a_1, \dots, a_k\}$ and $\{b_1, \dots, b_l\}$ for $\pi_3$\\
\item $\{i,i+1, b_1, \dots, b_l\}$ and $\{a_1, \dots, b_k\}$ for $\pi_3$\\
\end{itemize}
when $k,l \geq 2$. If $k=1$ then $\pi_4 =0$ instead and if $l=1$ then $\pi_3 =0$ instead. The sum of partitions given by $\sigma(\pi)$ is best described with a picture, see Figure~\ref{skein relations}. The three possibilities (depending on whether $k,l \geq 2$) are the three skein relations mentioned earlier.
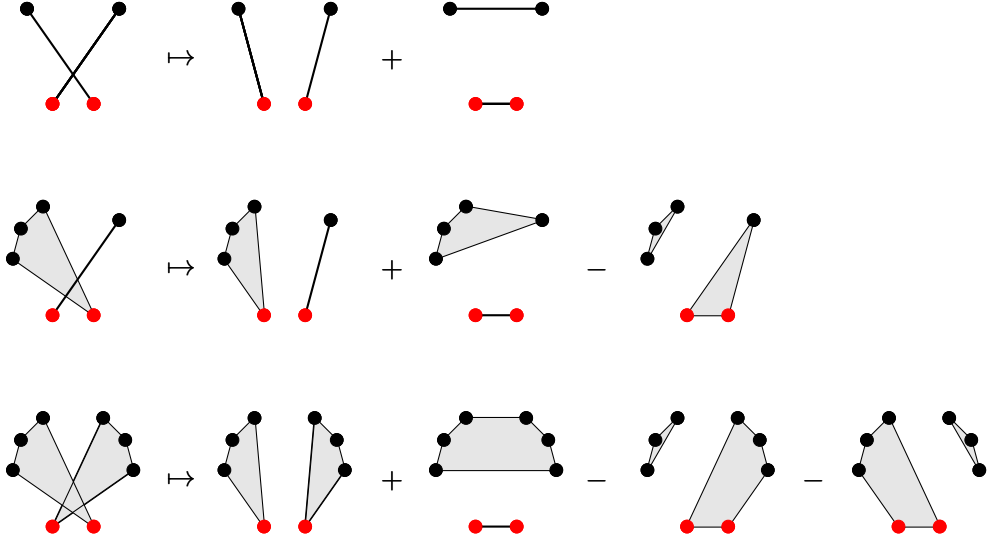
\begin{figure}
\begin{tikzpicture}[scale=0.8]

    \newcommand{\oneplusone}
        {
        \foreach \i in {1,2}
                {
                \coordinate (p\i) at (40*\i + 210:\r);
                \filldraw(p\i)[red] circle (\pointradius pt);
                }
         \foreach \j in {3,4}
                {
                \coordinate (p\j) at (100*\j + 100 :\r);
                \filldraw(p\j) circle (\pointradius pt);
                }
        }
            
    \def\r{1}           % pentagon radius
    \def\pointradius{3} % points radius

        \foreach \a/\b/\c/\d [count=\shft from 0] in   
       {1/3/2/4,
        1/4/2/3,
        1/2/3/4}
        {
        \begin{scope}[xshift=100*\shft,yshift=0]
            \oneplusone
            \foreach \i in {1,...,4}
                \draw[thick] (p\a) -- (p\b);
                \draw[thick] (p\c) -- (p\d);
                
                \oneplusone
            
        \end{scope}
        }

    \oneplusone

    \newcommand{\oneplussome}
        {
        \foreach \i in {1,2}
                {
                \coordinate (p\i) at (40*\i + 210:\r);
                \filldraw(p\i)[red] circle (\pointradius pt);
                }
         \foreach \j in {3}
                {
                \coordinate (p\j) at (100*\j + 100 :\r);
                \filldraw(p\j) circle (\pointradius pt);
                }
          \foreach \k in {4,5,6}
                {
                \coordinate (p\k) at (30*\k + 360 :\r);
                \filldraw(p\k) circle (\pointradius pt);
                }
        }
            
    \def\r{1}           % pentagon radius
    \def\pointradius{3} % points radius
    
     \foreach \a/\b/\c/\d/\e/\f [count=\shft from 0] in   
       {1/2/3/4/5/6,
        2/1/3/4/5/6,
        1/3/2/4/5/6}
        {
        \begin{scope}[xshift=100*\shft,yshift=-100]
           	 \oneplussome
	 	
               	 \draw[thick] (p\b) -- (p\d) -- (p\e) -- (p\f) -- cycle;
		 \fill[black!10] (p\b) -- (p\d) -- (p\e) -- (p\f) -- cycle;

                \draw[thick] (p\a) -- (p\c);
                
                \oneplussome
        \end{scope}
        }

         \foreach \a/\b/\c/\d/\e/\f [count=\shft from 0] in   
       {1/2/3/4/5/6}
        {
        \begin{scope}[xshift=100*\shft + 300,yshift=-100]
           	 \oneplussome
	 	
               	 \draw[thick] (p\a) -- (p\b) -- (p\c) -- cycle;
	 
	  \fill[black!10] (p\a) -- (p\b) -- (p\c)  -- cycle;
       
                \draw[thick] (p\d) -- (p\e) -- (p\f) -- cycle;
                
                 \fill[black!10] (p\d) -- (p\e) -- (p\f) -- cycle;
                 
                 \oneplussome
        \end{scope}
        }

    \tikzset{twopurple/.style={2purple,line width=8pt,rounded corners=2pt,cap=round}}
    \tikzset{threegreen/.style={3green,line width=8pt,rounded corners=2pt,cap=round,fill}}

    \newcommand{\someplussome}
        {
        \foreach \i in {1,2}
                {
                \coordinate (p\i) at (40*\i + 210:\r);
                \filldraw(p\i)[red] circle (\pointradius pt);
                }
         \foreach \j in {3,4,5}
                {
                \coordinate (p\j) at (30*\j + 270 :\r);
                \filldraw(p\j) circle (\pointradius pt);
                }
          \foreach \k in {6,7,8}
                {
                \coordinate (p\k) at (30*\k + 300 :\r);
                \filldraw(p\k) circle (\pointradius pt);
                }
        }
            
    \def\r{1}           % pentagon radius
    \def\pointradius{3} % points radius

         \foreach \a/\b/\c/\d/\e/\f/\g/\h [count=\shft from 0] in   
       {1/2/3/4/5/6/7/8,
       2/1/3/4/5/6/7/8}
        {
        \begin{scope}[xshift=100*\shft,yshift=-200]
           	 \someplussome
	 	
               	 \draw[thick] (p\a) -- (p\c) -- (p\d) -- (p\e) -- cycle;
		 \fill[black!10] (p\a) -- (p\c) -- (p\d) -- (p\e) -- cycle;
		 
		 \draw[thick] (p\b) -- (p\f) -- (p\g) -- (p\h) -- cycle;
		 \fill[black!10] (p\b) -- (p\f) -- (p\g) -- (p\h) -- cycle;
		 
		 \draw[thin] (p\a) -- (p\c);
		  \draw[thin] (p\a) -- (p\e);
		 
		 \someplussome

        \end{scope}
        }
        
                 \foreach \a/\b/\c/\d/\e/\f/\g/\h [count=\shft from 0] in   
       {1/2/3/4/5/6/7/8}
        {
        \begin{scope}[xshift=100*\shft + 200,yshift=-200]
           	 \someplussome
	 	
               	 \draw[thick]  (p\c) -- (p\d) -- (p\e) -- (p\f) -- (p\g) -- (p\h) -- cycle;
		 \fill[black!10] (p\c) -- (p\d) -- (p\e) -- (p\f) -- (p\g) -- (p\h) -- cycle;
		 
		 \draw[thick] (p\a) -- (p\b);

		 \someplussome

        \end{scope}
        }
        
                 \foreach \a/\b/\c/\d/\e/\f/\g/\h [count=\shft from 0] in   
       {1/2/3/4/5/6/7/8,
       1/2/6/7/8/3/4/5}
        {
        \begin{scope}[xshift=100*\shft + 300,yshift=-200]
           	 \someplussome
	 	
               	 \draw[thick] (p\a) -- (p\b) -- (p\c) -- (p\d) -- (p\e) -- cycle;
		 \fill[black!10] (p\a) -- (p\b) -- (p\c) -- (p\d) -- (p\e) -- cycle;
		 
		 \draw[thick]  (p\f) -- (p\g) -- (p\h) -- cycle;
		 \fill[black!10] (p\f) -- (p\g) -- (p\h) -- cycle;
		 
		 \someplussome

        \end{scope}
        
        \node at (1.8,-0.2) {$\mapsto$};
          \node at (5.3,-0.2) {$+$};

        \node at (1.8,-3.7) {$\mapsto$};
        \node at (5.3,-3.7) {$+$};
        \node at (8.7,-3.7) {$-$};

         \node at (1.8,-7.2) {$\mapsto$};
         \node at (5.3,-7.2) {$+$};
         \node at (8.7,-7.2) {$-$};
         \node at (12.3,-7.2) {$-$};
        
        }
        \end{tikzpicture}
 \caption{The three skein relations defining the action of $\symm_n$ on $\CC[NCP(n)]$. The red vertices
      are adjacent indices $i, i+1$
      and the shaded blocks have at least three elements. The symmetric 3-term relation obtained
      by reflecting the middle relation across the $y$-axis is not shown.}
\label{skein relations}
\end{figure}
A more detailed description of this action can be found in \cite{Rhoades}.
Rhoades showed that we again have an $\symm_n$-equivariant linear projection $p_{\Pi}: \mathbb{C}[\Pi(n)] \rightarrow \mathbb{C}[NCP(n)]$ given for any set partition $\pi$ by 
\begin{equation}
\pi \mapsto w^{-1} \cdot (w \circ \pi),
\end{equation}
where $w$ is any permutation for which $w \circ \pi$ (here $\circ$ denotes the action of $\symm_n$ on all set partitions) is noncrossing. The kernel of this projection is generated by the set of all elements of the form $w \circ (s_i \circ \pi +\sigma(\pi))$ (the skein relations) for any permutation $w$ and almost noncrossing set partition $\pi$.

Rhoades was able to determine the $\symm_n$-irreducible structure of this module. In particular, the span of all singleton-free noncrossing set partitions with exactly $k$ blocks is an $\symm_n$-irreducible of shape $(k,k,1^{n-2k})$, and the span of all noncrossing set partitions with exactly $s$ singletons and exactly $k$ non-singleton blocks is isomorphic to an induction product of $S^{(k,k,1^{n-2k-s})}$ with the sign representation of $\symm_s$. Similarly, if we restrict the action on $\mathbb{C}[NCM(n)]$ to the span of noncrossing perfect matchings, i.e. noncrossing matchings of $[2n]$ with exactly $n$ pairs, then this action gives a model for the $\symm_n$-irreducible of shape $(n,n)$ called the $SL_2$-web basis. If we instead restrict to the span of noncrossing matchings with exactly $k$ pairs, we get a submodule isomorphic to the induction product of $S^{(k,k)}$ and a sign representation of $\symm_{n-2k}$. By the Pieri rule, this induction product is a direct sum of three irreducible submodules, one of which is isomorphic to $S^{(k,k,1^{n-2k})}$ appears in this induction product, so there exists a unique embedding of $\mathbb{C}[NCP(n)_0]$, the span of all singleton-free noncrossing set partitions in $\mathbb{C}[NCP(n)]$, into $\mathbb{C}[NCM(n)]$. The main result of this paper explicitly describes the embedding as follows:
\begin{theorem}
The linear map $f_n: \mathbb{C}[NCP(n)_0] \rightarrow \mathbb{C}[NCM(n)]$ defined by
\[
f_n(\pi) = \sum_{m \in M_\pi(n) } m
\]
is a $\symm_n$-equivariant embedding of vector spaces. Here $M_\pi(n)$ is defined to be the set of all matchings $m$ in $M(n)$ for which each block of $\pi$ contains exactly one pair in $m$.
\end{theorem}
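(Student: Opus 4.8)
The plan is to realize $f_n$ as the map induced, through the equivariant projection $p_\Pi$, from a manifestly equivariant map defined on all singleton-free set partitions; injectivity then follows from the representation-theoretic facts recalled above.

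Write $\CC[\Pi(n)_0]$ for the span of the singleton-free set partitions of $[n]$. For any $\pi\in\Pi(n)_0$, not necessarily noncrossing, keep the definition of $M_\pi(n)$ as the set of matchings of $[n]$ with exactly one pair inside each block of $\pi$, and set $g(\pi)=\sum_{m\in M_\pi(n)}m\in\CC[M(n)]$. Two elementary observations drive the argument. First, $g\colon\CC[\Pi(n)_0]\to\CC[M(n)]$ is $\symm_n$-equivariant for the sign-twisted actions $\circ$: since $\sigma\star M_\pi(n)=M_{\sigma\star\pi}(n)$ as sets, we get $\sigma\circ g(\pi)=\sign(\sigma)\sum_m\sigma\star m=\sign(\sigma)\,g(\sigma\star\pi)=g(\sigma\circ\pi)$. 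Second, if $\pi$ is noncrossing then every $m\in M_\pi(n)$ is noncrossing: a crossing $\{a,c\},\{b,d\}$ with $a<b<c<d$ inside $m$ consists of two pairs of $m$, which lie in two distinct blocks of $\pi$ because each block carries only one pair of $m$, so $a,c$ together with $b,d$ exhibit a crossing of $\pi$. Hence $g(\pi)\in\CC[NCM(n)]$ and $g(\pi)=f_n(\pi)$ whenever $\pi$ is noncrossing.

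The heart of the proof is to show that $F:=p_M\circ g\colon\CC[\Pi(n)_0]\to\CC[NCM(n)]$ annihilates $\ker (p_\Pi|_{\CC[\Pi(n)_0]})$, equivalently that $g$ carries this kernel into $\ker p_M$. That kernel is spanned by the $\symm_n$-translates $w\circ(\text{skein relation})$; $g$ is equivariant, and $\ker p_M$ is $\symm_n$-stable because $p_M$ is equivariant; so it suffices to check that $g$ sends each skein relation of Figure~\ref{skein relations}, for every admissible choice of the sizes of the two ``active'' blocks (the blocks meeting $\{i,i+1\}$), into the span of Ptolemy relations that generates $\ker p_M$. Each skein relation is supported on set partitions that coincide outside the two active blocks. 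Applying $g$, the choice of a pair inside each inactive block is common to all terms and factors out, leaving a finite identity in the active part: the signed sum of the local configurations, after a pair has been chosen inside each active block, must expand as an integer combination of Ptolemy relations. When both active blocks have size two this identity is precisely the Ptolemy relation; the cases of one, respectively two, active blocks of size at least three are an explicit calculation that is uniform in those sizes. I expect this verification to be the main obstacle.

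Granting it, $F$ descends through the surjection $p_\Pi|_{\CC[\Pi(n)_0]}$ to an $\symm_n$-equivariant map $\overline F\colon\CC[NCP(n)_0]\to\CC[NCM(n)]$ with $F=\overline F\circ p_\Pi$. For noncrossing $\pi$ we have $p_\Pi(\pi)=\pi$, so $\overline F(\pi)=p_M(g(\pi))=g(\pi)=f_n(\pi)$ by the second observation; since the noncrossing set partitions span $\CC[NCP(n)_0]$, we conclude $\overline F=f_n$, and in particular $f_n$ is $\symm_n$-equivariant. Finally $f_n$ preserves the grading by number of blocks, since each $m\in M_\pi(n)$ has as many pairs as $\pi$ has blocks; hence for each $k$ it restricts to a map from the span of singleton-free noncrossing set partitions with $k$ blocks into the span of noncrossing matchings with $k$ pairs. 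By the results quoted above the source is the irreducible $S^{(k,k,1^{n-2k})}$, which by the Pieri rule occurs with multiplicity one in the target, so by Schur's lemma the restriction is zero or injective; it is nonzero because $f_n(\pi)$ is a nonempty sum of distinct basis matchings. As the graded pieces have disjoint supports, $f_n$ is injective.
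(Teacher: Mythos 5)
Your strategy matches the paper's in outline: extend to a map $g$ on all of $\CC[\Pi(n)_0]$, use equivariance to reduce well-definedness to showing that $g$ carries skein relations into the span of Ptolemy relations, descend through $p_\Pi$, identify the result with $f_n$ on noncrossing partitions, and then obtain injectivity from the block-count grading together with the irreducibility of each $S^{(k,k,1^{n-2k})}$ summand and the nonvanishing of $f_n$ on a single basis element. Your two preliminary observations (equivariance of $g$, and that $M_\pi(n)$ consists of noncrossing matchings whenever $\pi$ is noncrossing) are both correct, and your injectivity argument is essentially identical to the one in Proposition~\ref{hiota}.

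The gap is the step you yourself flag as ``the main obstacle'': you do not verify that $g$ sends the three-, four-, and five-term skein relations into the span of Ptolemy relations. This is not a routine check to be deferred; it is the computational heart of the proof, and everything else is framing. In the five-term case one must exhibit explicit cancellations among $O(ij)$ terms for active blocks of sizes $i$ and $j$, and it is not obvious \emph{a priori} that the residue organizes into Ptolemy relations. The paper handles precisely this in Lemma~\ref{kernel}, and it does so by first replacing $\CC[\Pi(n)]$, $\CC[M(n)]$, $\CC[NCM(n)]$ with commutative algebras $R_n$, $A_n$, $M_n$ (with $M_n \cong \CC[NCM(n)]$ by Proposition~\ref{iso}) and realizing the map as an algebra morphism $h_n$. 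That multiplicative structure makes your informal ``the inactive blocks factor out'' literal, and reduces the verification to a short pairwise-cancellation argument producing a manifest sum of defining relations of $M_n$. Without that calculation, or an equivalent one carried out directly in $\CC[M(n)]$ modulo $\ker p_M$, the proposal is incomplete. A smaller point: the multiplicity-one statement from the Pieri rule is not needed for your injectivity step --- Schur's lemma applied to the irreducible source already gives zero-or-injective --- so you may drop it.
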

For an example of this map, let $\pi = \{\{1,2,3\},\{4,5\}\}$ then 
\[
f_n(\pi) = \{\{1,2\},\{4,5\}\} + \{\{1,3\},\{4,5\}\} + \{\{2,3\}, \{4,5\}\}
\]
is a sum of 3 matchings in $\mathbb{C}[NCM(n)]$.

The rest of the paper is organized as follows. Section 2 will provide necessary background information. Section 3 will prove our main result, the embedding from $\mathbb{C}[NCP(n)]$ to $\mathbb{C}[NCM(n)]$. Section 4 will determine the image of this embedding within $\mathbb{C}[NCM(n)]$.

\section{Background}

\subsection{$\symm_n$-representation theory}
For $n \in \mathbb{Z}_{\geq 0}$, a {\em partition} of $n$ is a weakly decreasing sequence $\lambda = (\lambda_1, \lambda_2, \dots, \lambda_k)$ of positive integers such that $\lambda_1 + \cdots + \lambda_k = n$. Partitions of $n$ can be represented by {\em Young diagrams}, which is an arrangement of square boxes into $n$ left-justified rows, with the $i^{th}$ row containing $\lambda_i$ boxes.

 Irreducible representations of the symmetric group $\symm_n$ are naturally indexed by partitions of $n$. Let $S^\lambda$ denote the $\symm_n$-irreducible corresponding to partition $\lambda$. Given two representations $V$ and $W$  of $\symm_{m_1}$ and $\symm_{m_2}$ respectively, with $m_1 + m_2 = n$, the induction product $V \circ W$ is given by
\[
V \circ W = \mathrm{Ind}_{\symm_{m_1} \times \symm_{m_2}}^{\symm_n} V \otimes W
\]
where $\symm_{m_1} \times \symm_{m_2}$ is identified with the parabolic subgroup of $\symm_n$ which permutes $\{1, \dots, m_1\}$ and $\{m_1+1, \dots, n\}$ separately. When $V$ is an irreducible representation $S^\mu$ for some partition $\mu$ of $m_1$ and $W$ is a trivial representation of $\symm_{m_2}$, the Pieri rule describes how to express $V \circ W$ in terms of irreducibles, 
\begin{equation}
S^{\mu} \circ \mathrm{triv}_{\symm_{m_2}} \cong \sum_\lambda S^\lambda
\end{equation}
where the sum is over all partitions $\lambda$ whose young diagram can be obtained from that of $\mu$ by adding $m_2$ boxes, no two in the same column. The dual Pieri rule describes the same when $W$ is a sign representation instead of a trivial representation we again have
\begin{equation}
S^{\mu} \circ \mathrm{sign}_{\symm_{m_2}} \cong \sum_\lambda S^\lambda
\end{equation}
but the sum is now over all partitions $\lambda$ whose young diagram can be obtained from that of $\mu$ by adding $m_2$ boxes, no two in the same {\em row}.

\subsection{Projections and their kernels}
Here we provide justification for the two claims made in the introduction regarding spanning sets for the kernels of the projection maps. The first such claim is standard, while the second indirectly follows from results in \cite{Rhoades}. We include a proof of the first, the proof of the second is analogous.
\begin{proposition}
\label{kernelm}
The kernel of the projection $p_M : \mathbb{C}[M(n)] \rightarrow \mathbb{C}[NCM(n)]$ is spanned by elements of the form 
\begin{multline}
\label{kernelform}
\{\{a_1,a_2\},\{a_3,a_4\},\{a_5,a_6\}, \dots, \{a_{2k-1}, a_{2k}\}\} \\+ \{\{a_1,a_3\},\{a_2,a_4\},\{a_5,a_6\}, \dots, \{a_{2k-1}, a_{2k}\}\} \\+ \{\{a_1,a_4\},\{a_2,a_3\},\{a_5,a_6\}, \dots, \{a_{2k-1}, a_{2k}\}\}
\end{multline}
%\begin{align*}
%&\{\{a_1,a_2\},\{a_3,a_4\},\{a_5,a_6\}, \dots, \{a_{2k-1}, a_{2k}\}\}\\ +& \{\{a_1,a_3\},\{a_2,a_4\},\{a_5,a_6\}, \dots, \{a_{2k-1}, a_{2k}\}\} \\+& \{\{a_1,a_4\},\{a_2,a_3\},\{a_5,a_6\}, \dots, \{a_{2k-1}, a_{2k}\}\}
%\end{align*}
for any $a_1, \dots, a_{2k} \in [n]$, i.e. sums of three matchings which differ by a Ptolemy relation.
\end{proposition}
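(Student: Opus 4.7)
The plan is to prove $\ker p_M = K$ by two inclusions, where $K \subseteq \CC[M(n)]$ denotes the span of the Ptolemy sums (\ref{kernelform}). For $K \subseteq \ker p_M$: since relabeling the four distinguished elements does not change the three-matching sum, I may assume $a_1 < a_2 < a_3 < a_4$. Writing $m_1, m_2, m_3$ for the three matchings with distinguished edges $\{a_1, a_2\}, \{a_3, a_4\}$; $\{a_1, a_3\}, \{a_2, a_4\}$; $\{a_1, a_4\}, \{a_2, a_3\}$, the first and third are noncrossing in the distinguished pairs while the second is crossing there. Taking $w$ to be the transposition swapping $a_2$ and $a_3$, expressed as a product of adjacent transpositions, one has $w \circ m_2 = \pm m_1$; unwinding the definition of the action $\cdot$ on $\CC[NCM(n)]$ --- which invokes the Ptolemy case precisely when an adjacent transposition would create a crossing --- yields $p_M(m_2) = -(m_1 + m_3)$, so $p_M(m_1 + m_2 + m_3) = 0$.

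For $\ker p_M \subseteq K$: my strategy is to show $\dim(\CC[M(n)]/K) \leq |NCM(n)|$, which combined with the inclusion just proved and the surjectivity of $p_M$ (it is the identity on $NCM(n)$) forces equality. I would prove the dimension bound by induction on the number of crossings $c(m)$, showing every matching $m$ is congruent modulo $K$ to a combination of noncrossing matchings. If $\{a, c\}, \{b, d\}$ is a crossing in $m$ with $a < b < c < d$, then (\ref{kernelform}) rewrites $m \equiv -m' - m'' \pmod{K}$, where $m'$ and $m''$ resolve the crossing into $\{a, b\}, \{c, d\}$ and $\{a, d\}, \{b, c\}$ respectively.

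The hard part is the combinatorial lemma $c(m'), c(m'') < c(m)$, needed for the induction to terminate. I would establish this by a case analysis on each other edge $\{e, f\}$ of $m$: partition $[n]$ into the five intervals determined by $a < b < c < d$, enumerate the placements of $(e, f)$, and verify that the triple recording the number of crossings of $\{e, f\}$ with $\{\{a, c\}, \{b, d\}\}$, with $\{\{a, b\}, \{c, d\}\}$, and with $\{\{a, d\}, \{b, c\}\}$ is always one of $(0, 0, 0)$, $(1, 1, 1)$, $(2, 0, 2)$, or $(2, 2, 0)$. Hence no other edge's contribution to the crossing count strictly increases under either resolution, while the crossing between the two distinguished edges is eliminated in both $m'$ and $m''$. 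Summing gives $c(m) - c(m'), c(m) - c(m'') \geq 1$, completing the induction.
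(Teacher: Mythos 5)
Your easy inclusion is where the argument breaks. The identity $p_M(m_2) = -(m_1+m_3)$ is false in general, because the undistinguished edges $\{a_5,a_6\},\dots,\{a_{2k-1},a_{2k}\}$ may cross each other or the distinguished edges, so $m_1$ and $m_3$ need not be noncrossing. For instance, take $n=6$ with distinguished elements $1,3,4,6$ and remaining edge $\{2,5\}$: then $m_1=\{\{1,3\},\{4,6\},\{2,5\}\}$ is crossing, while $p_M(m_2)$ lies in the span of noncrossing matchings, so the asserted identity cannot hold (although $p_M(m_1+m_2+m_3)=0$ is of course still true). Moreover, even when $m_1$ happens to be noncrossing, your $w$ is in general a product of several adjacent transpositions, and unwinding the $\cdot$ action along that word can trigger Ptolemy resolutions involving the other edges (any edge with an endpoint between $a_2$ and $a_3$), so it is not the one-step computation you describe. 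The repair is exactly what the paper does: since the collection of Ptolemy sums is permuted (up to sign) by the $\circ$ action and $p_M$ is $\symm_n$-equivariant, it suffices to treat the configuration $a_i=i$; there $w=s_2$ is a single adjacent transposition, the edges $\{5,6\},\{7,8\},\dots$ are noncrossing and untouched, and the computation closes in one line. Since your second half invokes the inclusion $K\subseteq \ker p_M$ to force equality of dimensions, this gap propagates, but it is fixable in a few lines.

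Your argument for the hard inclusion $\ker p_M\subseteq K$ is correct and genuinely different from the paper's. The paper inducts on the minimal number of adjacent transpositions needed to uncross $m$ and compares the $\circ$ and $\cdot$ actions term by term; you instead run a straightening argument, resolving one crossing at a time and concluding by the dimension count $\dim(\mathbb{C}[M(n)]/K)\le \#NCM(n)=\dim\mathrm{im}(p_M)$. Your key combinatorial lemma checks out: for an edge disjoint from $\{a,b,c,d\}$, the triple of its crossing numbers with the three pairings of $\{a,b,c,d\}$ is indeed always $(0,0,0)$, $(1,1,1)$, $(2,2,0)$ or $(2,0,2)$, crossings among the other edges are unchanged, and the crossing between the two distinguished edges disappears, so $c(m'),c(m'')\le c(m)-1$ and the induction terminates. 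What your route buys is a self-contained, purely combinatorial proof of the hard inclusion that never manipulates the partially defined $\cdot$ action; the paper's induction avoids all crossing-number bookkeeping but leans more heavily on the interplay between the two actions.
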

\begin{proof}
Let $\beta$ denote the set of all elements of the form given in equation~\ref{kernelform}. To see that the span of $\beta$ is contained in the kernel of $p_M$, note that by the $\symm_n$-equivariance of $p_M$ it suffices to check that applying $p_M$ gives 0 in the case where $a_i = i$ for all $i$. In this case, we have
\begin{multline}
p_M( \{1,2/3,4/ \cdots /2k-1,2k\} + \{1,3/2,4/ \cdots/2k-1,2k\} + \{1,4/2,3/ \cdots /2k-1,2k\}) \\ =  \{1,2/3,4/ \cdots /2k-1,2k\} + (2,3) \cdot (-\{1,2 / 3,4 / \cdots / 2k-1,2k\}) + \{1,4/2,3/ \cdots /2k-1,2k\} \\ =0
\end{multline}

To see that the kernel is contained in the span, note that since $p_M$ is a projection, the kernel is spanned by $m -p_M(m)$ for any matching $m$. Let $t$ denote the minimum number of transpositions $s_{i_1},\dots, s_{i_t}$ for which $(s_{i_1}\cdots s_{i_t})\star m$ is noncrossing, and let $w = s_{i_1}\cdots s_{i_t}$. We will show by induction on $t$ that $m- p_M(m) \in \textrm{span}(\beta)$. When $t=0$, then $m - p_M(m) = 0$, so the claim is true. Otherwise, assume the claim holds for $t-1$. We have $m - p_M(m) = s_{i_1}\circ (s_{i_1} \circ m) - s_{i_1}\cdot p_M(s_{i_1}\circ m)$. By our inductive hypothesis, $s_{i_1} \circ m - p_M(s_{i_1}\circ m)$ lies in the span of $\beta$, so it suffices to verify for any $b\in \beta$, that if we apply $s_{i_1} \circ (-)$ to every crossing term of $b$ and apply either $s_{i_1} \cdot (-)$ or $s_{i_1} \circ(-)$ to every noncrossing term of $b$, we remain in the span of $\beta$. This is true because $\beta$ is closed under the $\circ$ action, and for every noncrossing matching $m_1$, either
\[
s_{i_1} \circ m = s_{i_1} \cdot m
\]
or
\[
s_{i_1} \cdot m_1 = s_{i_1} \circ m_1 - (s_{i_1}\circ m_1 + m_1 + m_1')
\] where $m_1'$ is obtained by replacing the sets $\{i,a\}$ and $\{i+1,b\}$ with the sets $\{i,i+1\}$ and $\{a,b\}$. In the second case, $s_{i_1}\circ m_1 + m_1 + m_1'$ is in $\beta$.
\end{proof}

\begin{proposition}
\label{kernelp}
The kernel of the projection $p_\Pi : \mathbb{C}[\Pi(n)] \rightarrow \mathbb{C}[NCP(n)]$ is spanned by elements of the form 
\begin{align*}
w \circ (s_i \circ \pi +\sigma(\pi))
\end{align*} for any permutation $w$ and singleton-free almost noncrossing set partition $\pi$, i.e. sums of set partitions which differ by a skein relation.
\end{proposition}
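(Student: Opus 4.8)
The plan is to mirror the proof of Proposition~\ref{kernelm} almost line for line, replacing the single Ptolemy relation by the three skein relations. Write $\beta$ for the set of all elements $w\circ(s_i\circ\pi+\sigma(\pi))$ with $w\in\symm_n$, $1\le i\le n-1$, and $\pi$ a singleton-free almost-noncrossing set partition; the unique crossing of such a $\pi$ necessarily lies between two blocks of size at least two. Since $v\circ\bigl(w\circ(s_i\circ\pi+\sigma(\pi))\bigr)=(vw)\circ(s_i\circ\pi+\sigma(\pi))$, the set $\beta$ is stable under the $\circ$-action of $\symm_n$, which is the only closure property of $\beta$ used below.

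To see that $\mathrm{span}(\beta)\subseteq\ker p_\Pi$, note that by $\symm_n$-equivariance of $p_\Pi$ it is enough to check $p_\Pi(s_i\circ\pi+\sigma(\pi))=0$ for a single $\pi$. Because $\pi$ is almost-noncrossing, its crossing occurs between its block containing $i$ and its block containing $i+1$ and disappears upon applying $s_i$; hence $s_i\circ\pi$ and every term of $\sigma(\pi)$ are noncrossing set partitions (precisely the partitions occurring in the skein relations of Figure~\ref{skein relations}), so $p_\Pi$ fixes each summand and $p_\Pi(s_i\circ\pi+\sigma(\pi))=s_i\circ\pi+\sigma(\pi)$. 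That this element is $0$ in $\mathbb{C}[NCP(n)]$ is just the third clause of the definition of the $\symm_n$-action on $\mathbb{C}[NCP(n)]$ read in reverse: setting $\rho:=s_i\circ\pi$, which is noncrossing with $i$ and $i+1$ in distinct blocks of size $\ge2$, one has that the block-swap $\rho'$ is $\pi$ and the rule $s_i\cdot\rho=\sigma(\rho')$ rearranges to $s_i\circ\pi+\sigma(\pi)=0$. (This is the ``indirect'' appeal to \cite{Rhoades} mentioned in the text: it records that the skein relations generate $\ker p_\Pi$.)

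For the reverse inclusion, $p_\Pi$ is a projection onto $\mathbb{C}[NCP(n)]$ (it fixes every noncrossing set partition), so $\ker p_\Pi$ is spanned by the vectors $\pi-p_\Pi(\pi)$, $\pi\in\Pi(n)$. Let $t(\pi)$ denote the least number of adjacent transpositions whose $\circ$-action carries $\pi$ to a noncrossing partition, and induct on $t(\pi)$. If $t(\pi)=0$ then $\pi-p_\Pi(\pi)=0$. If $t(\pi)\ge1$, fix $s_{i_1}$ at the front of a shortest such sequence, so that $t(s_{i_1}\circ\pi)=t(\pi)-1$; abbreviating $\rho_0:=s_{i_1}\circ\pi$ and using $s_{i_1}\circ s_{i_1}\circ=\mathrm{id}$ together with equivariance of $p_\Pi$, write
\[
\pi-p_\Pi(\pi)=s_{i_1}\circ\bigl(\rho_0-p_\Pi(\rho_0)\bigr)+\bigl(s_{i_1}\circ p_\Pi(\rho_0)-s_{i_1}\cdot p_\Pi(\rho_0)\bigr).
\]
The first term lies in $\mathrm{span}(\beta)$ by the inductive hypothesis and the $\circ$-stability of $\beta$. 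Hence the entire argument reduces to the claim that, for every noncrossing set partition $\rho$, the element $s_i\circ\rho-s_i\cdot\rho$ lies in $\mathrm{span}(\beta)$; this is then applied term by term to the noncrossing vector $p_\Pi(\rho_0)$.

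The claim is checked by going through the three cases defining $s_i\cdot\rho$, just as in Proposition~\ref{kernelm}. If $i,i+1$ are in the same block of $\rho$, then $s_i$ fixes $\rho$ and $s_i\circ\rho=-\rho=s_i\cdot\rho$; if at least one of $i,i+1$ is a singleton of $\rho$, then the block-swap $\rho'$ agrees with the relabelling $s_i(\rho)$ and is again noncrossing, so $s_i\circ\rho=-\rho'=s_i\cdot\rho$; in either case the difference is $0$. In the remaining case, where $i,i+1$ lie in distinct blocks of $\rho$ of size $\ge2$, the relabelling $\rho':=s_i(\rho)$ is almost-noncrossing, its two crossing blocks being the ($\ge2$-element) blocks containing $i$ and $i+1$, and $s_i\circ\rho-s_i\cdot\rho$ is a signed combination of skein-relation elements attached to $\rho'$, hence in $\mathrm{span}(\beta)$ — exactly as the $\beta$-element $s_i\circ m_1+m_1+m_1'$ accounts for $s_i\circ m_1-s_i\cdot m_1$ in the proof of Proposition~\ref{kernelm}. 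The step I expect to demand the most care is precisely this last one: correctly unwinding the signs coming from the sign-twist in $\circ$ and from $\sigma$, and keeping track of the spectator blocks of $\rho$ (which persist as spectator blocks of $\rho'$ and of every term of $\sigma(\rho')$), so that what is produced is genuinely an element of $\mathrm{span}(\beta)$ as defined from singleton-free almost-noncrossing partitions; if necessary one reduces the outer induction to singleton-free $\pi$ at the outset, using the compatibility of $p_\Pi$ with deletion of singleton blocks.
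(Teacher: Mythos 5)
Your proposal mirrors Proposition~\ref{kernelm} step by step, which is exactly what the paper intends: it explicitly declines to prove Proposition~\ref{kernelp} directly, stating only that the proof is analogous to that of Proposition~\ref{kernelm} and indirectly a consequence of \cite{Rhoades}. Your decomposition $\pi-p_\Pi(\pi)=s_{i_1}\circ(\rho_0-p_\Pi(\rho_0))+(s_{i_1}\circ p_\Pi(\rho_0)-s_{i_1}\cdot p_\Pi(\rho_0))$ and the subsequent reduction to showing $s_i\circ\rho-s_i\cdot\rho\in\mathrm{span}(\beta)$ in the three cases of the action is the same inductive argument, and your closing caveat about restricting to singleton-free partitions correctly flags that (as its use in Theorem~\ref{maintheorem} confirms) the proposition should really be read with domain $\mathbb{C}[\Pi(n)_0]$ to match the singleton-free spanning set.
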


\section{The embedding} In order to prove that our map is an embedding, it will be helpful to introduce a multiplicative structure to work with. To do so we will introduce three graded commutative $\mathbb{C}$-algebras $R_n$, $A_n$, and $M_n$, all with $\symm_n$-actions. If we forget the multiplicative structure, the underlying $\symm_n$-modules of $R_n$, $A_n$, and $M_n$ will contain a copy $\mathbb{C}[\Pi(n)]$, $\mathbb{C}[M(n)]$, and $\mathbb{C}[NCM(n)]$ respectively. In the case of $M_n$, this copy will be all of $M_n$. The structure of this proof is best explained via a commutative diagram, see Figure~\ref{cdasd}. 
We will define a map $h_n \circ \iota_\Pi: \mathbb{C}[\Pi(n)_0] \rightarrow M_n$, and show that its kernel  is equal to the kernel of $p_\Pi$. The desired embedding $f_n$ then follows from the first isomorphism theorem.

\begin{figure}[h]
\label{cdasd}
\begin{tikzcd}
R_n \arrow[r, "g_n"]\arrow[rr, "h_n", bend left] & A_n \arrow[r, two heads, "q"]  & M_n \arrow[d, leftrightarrow, "\cong"]\\
& \mathbb{C}[M(n)] \arrow[r, two heads, "p_M"]\arrow[u, hook, "\iota_M"] & \mathbb{C}[NCM(n)]\\
\mathbb{C}[\Pi(n)_0] \arrow[uu, hook, "\iota_\Pi"]\arrow[rr, two heads, "p_\Pi"] & &\mathbb{C}[NCP(n)_0]\arrow[u, dashed, hook, "f_n"]
\end{tikzcd}
\caption{A commutative diagram of the maps used in the following proofs. All maps shown are $\symm_n$-equivariant linear maps. Maps between $R_n$, $A_n$, and $M_n$ are also morphisms of $\mathbb{C}$-algebras. The desired embedding is shown as a dashed arrow.}
\end{figure}
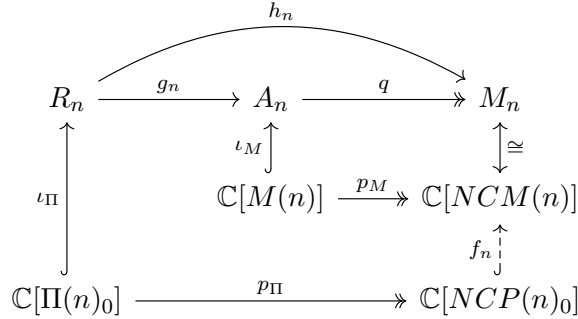

We begin with the definition of $R_n$.

\begin{definition} Let $n \in \mathbb{N}$. Define $R_n$ to be the unital graded commutative $\mathbb{C}$-algebra generated by nonempty subsets of $[n]$ where each generator has degree $1$.  Define a degree-preserving action of $\symm_n$ on $R_n$ by
\[
\pi \cdot \{a_1, \dots, a_k\} = \mathrm{sign}(\pi)\{\pi(a_1), \cdots, \pi(a_k)\}
\]
 for any permutation $\pi \in \symm_n$ and generator $\{a_1, \cdots, a_k\} \in R_n$.
\end{definition}

The ring $R_n$ can be thought of as the ring of multiset collections of subsets of $[n]$ with multiplication given by union of collections and addition purely formal. It is in this sense that it contains a copy of $\mathbb{C}[\Pi(n)]$, as set partitions of $n$ are particular collections of subsets of $[n]$. To be precise, there exists an $\symm_n$-module embedding $\iota_\Pi: \mathbb{C}[\Pi(n)_0] \hookrightarrow R_n$, given by sending any singleton-free set partiton $\pi$ to the product of its blocks.

The ring $A_n$ is a subring of $R_n$ designed to model matchings in much the same way which $R_n$ models set partitions. It is defined as follows.
\begin{definition}
Let $n \in \mathbb{N}$ and define $A_n$ to be the $\symm_n$-invariant subalgebra of $R_n$ generated by the size two subsets of $[n]$. The subring $A_n$ is invariant under the $\symm_n$-action of $R_n$, and thus inherits a graded $\symm_n$-action from $R_n$.
\end{definition}

Like $R_n$, the ring $A_n$ can be thought of as the ring of multiset collections of size-two subsets of $[n]$. As matchings are particular collections of size-two subsets of $[n]$, we again have an $\symm_n$-module embedding $\iota_M:\mathbb{C}[M(n)] \hookrightarrow A_n$,  given by 
\[
\{\{a_1,b_1\}, \dots, \{a_k,b_k\}\} \longmapsto \{a_1,b_1\} \cdots  \{a_k,b_k\}
\]
for any matching $\{\{a_1,b_1\},  \dots, \{a_k,b_k\}\})$.

Our final ring, $M_n$, is defined as a quotient of $A_n$ in the following way.
\begin{definition}
Define $I_n$ to be the ideal of $A_n$ generated by elements of the following forms
\begin{itemize}
\item $\{a,b\} \cdot \{a,b\}$\\
\item $\{a,b\}\cdot\{a,c\}$\\
\item $\{a,b\}\cdot\{c,d\} + \{a,c\}\cdot\{b,d\} + \{a,d\}\cdot\{b,c\}$
\end{itemize}
for any distinct $a,b,c,d \in [n]$. Then $I_n$ is a homogeneous $\symm_n$-invariant ideal of $A_n$, so define $M_n$ to be the graded $\symm_n$-module  $M_n := A_n/I_n$. Let $q : A_n \rightarrow M_n$ be the quotient map.
\end{definition}

The first two types of elements listed in the definition of $I_n$ serve the purpose of removing collections of size-two subsets which are not actually matchings. The third is the Ptolemy relation used to define the action of $\symm_n$ on $\mathbb{C}[NCM(n)]$, so quotienting by this ideal gives an $\symm_n$-module isomorphic to $\mathbb{C}[NCM(n)]$, as per the following argument.
\begin{proposition}
\label{iso}
There is an $\symm_n$-module isomorphism from $\mathbb{C}[NCM(n)]$ to $M_n$, given by 
\[
\{\{a_1,b_1\}, \dots, \{a_k,b_k\}\} \longmapsto \{a_1,b_1\} \cdots  \{a_k,b_k\}
\]
for any noncrossing matching $\{\{a_1,b_1\},  \dots, \{a_k,b_k\}\}$.
\end{proposition}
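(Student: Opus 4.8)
The plan is to recognize the map in the statement as the composite $\Psi:=q\circ\iota_M\colon\mathbb{C}[M(n)]\to M_n$ restricted to the span of noncrossing matchings, and to reduce everything to two facts: that $\Psi$ is surjective, and that $\ker\Psi=\ker p_M$. Surjectivity is immediate once one notes that $A_n$ has a basis of monomials --- the finite multisets of two-element subsets of $[n]$: a monomial containing a repeated pair or two pairs sharing a point is divisible by one of the first two types of generators of $I_n$ and hence dies under $q$, while every other monomial is $\iota_M$ of an honest matching; thus $M_n=q(A_n)$ is spanned by the $\Psi$-images of matchings. Granting $\ker\Psi=\ker p_M$, the first isomorphism theorem turns $\Psi$ into an isomorphism $\mathbb{C}[M(n)]/\ker p_M\xrightarrow{\sim} M_n$. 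Since $p_M$ is surjective with the same kernel and restricts to the identity on noncrossing matchings, composing with the inverse of the induced isomorphism $\mathbb{C}[M(n)]/\ker p_M\xrightarrow{\sim}\mathbb{C}[NCM(n)]$ yields an isomorphism $\mathbb{C}[NCM(n)]\xrightarrow{\sim} M_n$; chasing a noncrossing matching $m$ through sends it to its coset and then to $q(\iota_M(m))=\{a_1,b_1\}\cdots\{a_k,b_k\}$, which is exactly the asserted map. As $\iota_M$, $q$, and $p_M$ are all $\symm_n$-equivariant, so is the resulting isomorphism.

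For the inclusion $\ker p_M\subseteq\ker\Psi$ I would invoke Proposition~\ref{kernelm}: $\ker p_M$ is spanned by sums $m_1+m_2+m_3$ of three matchings agreeing outside four points $a_1,a_2,a_3,a_4$, where they realize the three pairings of a Ptolemy relation. Writing $\nu$ for the common matching on the remaining points, one has
\[
\iota_M(m_1+m_2+m_3)=\iota_M(\nu)\cdot\big(\{a_1,a_2\}\{a_3,a_4\}+\{a_1,a_3\}\{a_2,a_4\}+\{a_1,a_4\}\{a_2,a_3\}\big),
\]
which is a monomial times the third type of defining generator of $I_n$, hence lies in $I_n$ and is killed by $q$.

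The reverse inclusion $\ker\Psi\subseteq\ker p_M$ is the crux, and I would get it by producing a map $\bar\psi\colon M_n\to\mathbb{C}[NCM(n)]$ with $\bar\psi\circ\Psi=p_M$. Define $\psi\colon A_n\to\mathbb{C}[NCM(n)]$ on the monomial basis by sending the monomial $\iota_M(m)$ of a matching $m$ to $p_M(m)$ and every other monomial to $0$; then $\psi\circ\iota_M=p_M$ by construction. The essential point to verify is that $\psi$ annihilates $I_n$. Since $I_n$ is spanned by products of one of the three listed generators with a monomial, and since any monomial with a repeated pair or with two pairs meeting in a point is sent to $0$, the only nontrivial case is the Ptolemy generator on $\{a,b,c,d\}$ times a matching $\nu$ disjoint from $\{a,b,c,d\}$; there $\psi$ returns $p_M$ of the three-term Ptolemy sum of matchings, which vanishes by Proposition~\ref{kernelm} once more. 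Hence $\psi$ descends to the desired $\bar\psi$, and $\bar\psi\circ\Psi=p_M$ forces $p_M$ to vanish on $\ker\Psi$. I expect this well-definedness check for $\psi$ --- keeping the monomial bookkeeping straight, matching the degenerate cases against the first two generator types of $I_n$, and reducing the genuine case to the matching Ptolemy relation --- to be the only step requiring real care; the rest is formal diagram-chasing.
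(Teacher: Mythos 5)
Your proof follows the same overall route as the paper: recognize the stated map as $q\circ\iota_M$, check surjectivity via the observation that every monomial of $A_n$ is either $\iota_M$ of a matching or lies in $I_n$, identify $\ker(q\circ\iota_M)$ with $\ker p_M$, and conclude by the first isomorphism theorem. The one place you go beyond the paper is the inclusion $\ker(q\circ\iota_M)\subseteq\ker p_M$: the paper simply asserts that $\ker(q\circ\iota_M)$ is spanned by preimages of monomial multiples of the Ptolemy generator and then cites Proposition~\ref{kernelm}, whereas you actually prove this by constructing the retraction $\psi\colon A_n\to\mathbb{C}[NCM(n)]$ (matching monomials go via $p_M$, all other monomials to $0$), verifying that $\psi$ kills every monomial multiple of each generator of $I_n$, and deducing $\bar\psi\circ(q\circ\iota_M)=p_M$. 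Your case analysis is complete --- in particular you correctly dispose of the case where the cofactor monomial meets $\{a,b,c,d\}$, since then all three terms of the Ptolemy product are non-matching monomials --- so this is a sound and somewhat more rigorous justification of a step the paper leaves implicit. The remaining bookkeeping (the forward inclusion $\ker p_M\subseteq\ker(q\circ\iota_M)$, equivariance, and the diagram chase identifying the induced isomorphism with the stated map) matches the paper.
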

\begin{proof}
Let $q: A_n \rightarrow M_n$ be the quotient map. Consider the map $q \circ \iota_M : \mathbb{C}[M(n)] \rightarrow M_n$. The kernel of $q \circ \iota_M$ is spanned by the elements of $\mathbb{C}[M(n)]$ which are sent under $\iota_M$ to a multiple of $\{a,b\}\cdot\{c,d\} + \{a,c\}\cdot\{b,d\} + \{a,d\}\cdot\{b,c\}$ for some distinct $a,b,c,d \in [n]$. Therefore the kernel of $q \circ \iota_M$ is equal to the kernel of $p_M$ by Proposition~\ref{kernelm}. The image of $q\circ \iota_M$ is all of $M_n$. To see this, note that products of generators of $A_n$ form a vector space basis for $A_n$, and every such basis element is either in the image of $\iota_M$ or in $I_n$.  We therefore have
\begin{equation}
\mathbb{C}[NCM(n)] \cong \mathbb{C}[M(n)]/\mathrm{ker}(p_M) = \mathbb{C}[M(n)]/\mathrm{ker}(q \circ \iota_M) \cong \mathrm{im}(q \circ \iota_M) = M_n 
\end{equation}
where the isomorphism on the left is induced by the map $p_M$ and the isomorphism on the right is induced by the map $q \circ \iota_M$. Composing these isomorphisms gives the stated map.
\end{proof}

The following definition is the key idea behind our main theorem.
\begin{definition}
Let $n\in \mathbb{N}$. Define the $\mathbb{C}$-algebra map $g_n: R_n \rightarrow A_n$  by
\[
g_n(A) = \sum_{\{a,b\} \subseteq A} \{a,b\}
\]
for generators $A \in R_n$. Singleton sets are sent to $0$ by $g_n$. Define $h_n := q \circ g_n$ where $q$ is the quotient map $A_n \rightarrow M_n$.
\end{definition}
We give the definition in terms of $R_n$, $A_n$, and $M_n$ for simplicity and ease of proofs later, but the map we really care about is $ h_n \iota_\Pi : \mathbb{C}[\Pi(n)] \rightarrow M_n$. Under this map, a set partition $\pi$ is sent to the product of its blocks, then each block is sent to the sum of all size-two subsets it contains. After distributing, we get a sum of all ways to pick a size two subset from each block. Composing with the isomorphism between $M_n$ and $\mathbb{C}[NCM(n)]$ we get the sum of all matchings such that each block of $\pi$ contains exactly one pair of the matching, as in Theorem~\ref{maintheorem}.

 The $\symm_n$-equivariance of $h_n$ is simple to check. We have the  following calculation.
\begin{lemma}
\label{kernel}
Let $i,j \geq 2$ and let $p_1, p_2, \dots, p_i$ and $q_1, q_2, \dots, q_j$ be in $[n]$. Then the element of $R_n$ 
\begin{align*}
\kappa_n := \{p_1,\dots, p_i\}\cdot \{q_1, \dots, q_j\} &- \{p_1, \cdots p_{i-1}\} \cdot \{q_1, \cdots q_{j}, p_{i}\} -  \{p_1, \cdots p_{i}, q_j\} \cdot \{q_1, \cdots q_{j-1}\} \\&+  \{p_1, \cdots p_{i-1}, q_j\} \cdot \{q_1, \cdots q_{j-1}, p_{i}\}  +  \{p_1, \cdots p_{i-1}, q_1, \cdots q_{j-1}\} \cdot \{p_i, q_j\}
\end{align*}
lies in the kernel of $h_n$.
\end{lemma}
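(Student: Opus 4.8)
\emph{Proof strategy.} Since $h_n = q\circ g_n$ and $\ker q = I_n$, it suffices to show $g_n(\kappa_n)\in I_n$; then $h_n(\kappa_n)=q(g_n(\kappa_n))=0$. I would assume $p_1,\dots,p_i,q_1,\dots,q_j$ are pairwise distinct -- the case of interest, where they index two disjoint blocks of a set partition; repetitions can be accommodated by the same expansion, with some Ptolemy summands then degenerating into generators of $I_n$ of the remaining two types. The computation rests on the elementary identity
\[
g_n(A\cup\{x\})=g_n(A)+\sum_{a\in A}\{a,x\}\qquad (x\notin A),
\]
which records that a two-element subset of $A\cup\{x\}$ either lies in $A$ or equals $\{a,x\}$ for some $a\in A$.

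Write $p:=p_i$, $q:=q_j$, $P':=\{p_1,\dots,p_{i-1}\}$, $Q':=\{q_1,\dots,q_{j-1}\}$, $X:=g_n(P')$, and $Y:=g_n(Q')$. Since $g_n$ is a $\mathbb{C}$-algebra homomorphism, I would expand $g_n(\kappa_n)$ by applying $g_n$ to the two generators in each of the five products making up $\kappa_n$, using the displayed identity (iterated when two elements are adjoined) to write every $g_n$-value as a sum drawn from $X$, $Y$, the generator $\{p,q\}$, the four ``linear'' sums $\sum_{a\in P'}\{a,p\}$, $\sum_{a\in P'}\{a,q\}$, $\sum_{c\in Q'}\{c,p\}$, $\sum_{c\in Q'}\{c,q\}$, and the ``cross'' sum $\sum_{a\in P',c\in Q'}\{a,c\}$. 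Multiplying out the five products yields about twenty monomials in these pieces, and the plan is to check that every monomial having $X$ or $Y$ as a factor occurs with total coefficient $0$ (for example the four $XY$ monomials cancel, as do the two $X\cdot\sum_{c\in Q'}\{c,q\}$ monomials and the two $Y\cdot\{p,q\}$ monomials, and symmetrically). Exactly three monomials survive, giving
\[
g_n(\kappa_n)=\sum_{a\in P',\ c\in Q'}\Bigl(\{a,p\}\{c,q\}+\{a,q\}\{c,p\}+\{a,c\}\{p,q\}\Bigr).
\]

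For distinct $a,c,p,q$, each summand is, after reordering, a generator of $I_n$ of the third (Ptolemy) type $\{w,x\}\{y,z\}+\{w,y\}\{x,z\}+\{w,z\}\{x,y\}$, the four distinct indices being $a,p,c,q$. Hence $g_n(\kappa_n)\in I_n$, and we are done. The only laborious point is the middle step: one must keep track of the roughly twenty monomials and confirm that precisely the three ``Ptolemy'' products $\bigl(\sum_{a\in P'}\{a,p\}\bigr)\bigl(\sum_{c\in Q'}\{c,q\}\bigr)$, $\bigl(\sum_{a\in P'}\{a,q\}\bigr)\bigl(\sum_{c\in Q'}\{c,p\}\bigr)$, and $\bigl(\sum_{a\in P',c\in Q'}\{a,c\}\bigr)\{p,q\}$ survive while everything with an $X$- or $Y$-factor cancels. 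Organizing the monomials according to which of $p$, $q$, or neither occurs in each factor is what makes this bookkeeping routine rather than delicate.
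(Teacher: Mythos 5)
Your proposal is correct and takes essentially the same route as the paper: both expand $g_n(\kappa_n)$ (the paper writes this inside $h_n$, but it is the same computation), observe that all terms cancel except those in which one factor contains $p_i$ and the other contains $q_j$, and arrive at exactly
\[
\sum_{a\in P',\ c\in Q'}\bigl(\{a,p_i\}\{c,q_j\}+\{a,q_j\}\{c,p_i\}+\{a,c\}\{p_i,q_j\}\bigr),
\]
a sum of Ptolemy generators of $I_n$. The only difference is bookkeeping: you decompose each $g_n$-value into named pieces ($X$, $Y$, the four linear sums, the cross sum, $\{p,q\}$) and cancel at the level of monomials in those pieces, whereas the paper pairs the five double sums and cancels term-by-term according to whether $p_i$ or $q_j$ appears; your version is arguably tidier to verify but is the same argument.
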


When $i,j >2$, the element $\kappa_n$ corresponds to the five-term skein relation depicted in Figure 1. If either $i$ eqauls 2, then $\{p_1, \cdots p_{i-1}\} = \{p_1\}$ is a one element set and therefore sent to 0 by $h_n$, removing the term containing $\{p_1\}$ corresponds to the four-term skein relation depicted in Figure 1. Similarly, if $j$ equals 2 or $i$ and $j$ both equal two, removing the terms in $\kappa_n$ which are individually sent to 0 corresponds to the four or three-term skein relation depicted in Figure 1.

\begin{proof}
Applying $h_n$ to $\kappa_n$ gives
\begin{align*}
\sum_{\substack{\{a,b\} \subseteq \{p_1,\dots, p_i\}\\\{c,d\} \subseteq \{q_1, \dots, q_j\}}} \{a,b\}\cdot \{c,d\} &- \sum_{\substack{\{a,b\} \subseteq \{p_1,\dots, p_{i-1}\}\\\{c,d\} \subseteq \{q_1, \dots, q_j, p_i\}}} \{a,b\}\cdot \{c,d\} - \sum_{\substack{\{a,b\} \subseteq \{p_1,\dots, p_i, q_j\}\\\{c,d\} \subseteq \{q_1, \dots, q_{j-1}\}}} \{a,b\}\cdot \{c,d\} \\&+ \sum_{\substack{\{a,b\} \subseteq \{p_1,\dots, p_{i-1}, q_j\}\\\{c,d\} \subseteq \{q_1, \dots, q_{j-1}, p_i\}}} \{a,b\}\cdot \{c,d\} + \sum_{\substack{\{a,b\} \subseteq \{p_1,\dots, p_{i-1}, q_1, \cdots, q_{j-1}\}\\\{c,d\} \subseteq \{p_i,q_j\}}} \{a,b\}\cdot \{c,d\}
\end{align*}
Note that the pairs of sets defining the first and second summations in the above expression differ only in the location of $p_i$, and similarly for the third and fourth. Since these summations come with opposite signs, the $\{a,b\}, \{c,d\}$  terms in the above expression will cancel unless one of $a,b,c,d$ is equal to $p_i$. Similarly, comparing the first and third sums and the second and fourth sums we find cancellation unless at least one of $a,b,c,d$ is equal to $q_j$. If the remaining two elements of $a,b,c,d$ are both $p$'s or both $q$'s, then $\{a,b\}\cdot \{c,d\}$ also cancels. Therefore we have
\begin{equation}
h_n(\kappa_n) = \sum_{\substack{a \in \{p_1, \dots, p_{i-1}\} \\ b\in \{q_1, \dots, q_{j-1}\}}} \{a, p_i\} \cdot \{b,q_j\} + \{a, q_j\} \cdot \{b, p_i\} + \{a,b\}\cdot \{p_i,q_j\}
\end{equation}
which is manifestly a sum of the definining relations of $M_n$.
\end{proof}

The above calculation allows for the identification of the kernel of $h\circ \iota_\Pi$ and $p_\Pi$ mentioned in the beginning of this section.
\begin{proposition}
\label{hiota} The kernel of $h \circ \iota_\Pi$ is generated by the set of all elements of the form $w \circ (s_i \circ \pi +\sigma(\pi))$ (the skein relations) for any permutation $w$ and singleton-free almost noncrossing set partition $\pi$.
\end{proposition}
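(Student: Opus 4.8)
The plan is to show the two kernels coincide by proving containment in both directions, using the ring structure and Lemma~\ref{kernel} as the main tool.

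\medskip

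\noindent\textbf{The easy containment.} First I would verify that $\ker(p_\Pi) \subseteq \ker(h_n \circ \iota_\Pi)$. By Proposition~\ref{kernelp}, $\ker(p_\Pi)$ is spanned by the skein relation elements $w \circ (s_i \circ \pi + \sigma(\pi))$ for singleton-free almost noncrossing $\pi$. By $\symm_n$-equivariance of $h_n$ and $\iota_\Pi$, it suffices to handle $w = e$. For such a $\pi$, the crossing blocks are $\{i, a_1, \dots, a_k\}$ and $\{i+1, b_1, \dots, b_l\}$, and unwinding the definition of $\sigma(\pi) = \pi + \pi_2 - \pi_3 - \pi_4$ together with $s_i \circ \pi = -\pi'$ (where $\pi'$ swaps $i$ and $i+1$), the element $s_i \circ \pi + \sigma(\pi)$, viewed in $R_n$ via $\iota_\Pi$ — after factoring off the common product of the non-crossing blocks of $\pi$ — is exactly the element $\kappa_n$ of Lemma~\ref{kernel} with $\{p_1,\dots,p_i\} = \{i, a_1, \dots, a_k\}$, $\{q_1,\dots,q_j\} = \{i+1, b_1, \dots, b_l\}$, $p_i = i$, $q_j = i+1$ (and the degenerate cases $k=1$ or $l=1$ match the cases $i=2$ or $j=2$ of the lemma, where the terms with singleton factors vanish under $h_n$). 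Hence $h_n(\iota_\Pi(s_i \circ \pi + \sigma(\pi))) = (\text{common factor}) \cdot h_n(\kappa_n) = 0$, giving one inclusion.

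\medskip

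\noindent\textbf{The hard containment.} The reverse inclusion $\ker(h_n \circ \iota_\Pi) \subseteq \ker(p_\Pi)$ is the crux. The cleanest route is a dimension/rank count: since $p_\Pi : \mathbb{C}[\Pi(n)_0] \to \mathbb{C}[NCP(n)_0]$ is a surjection and we have just shown $\ker(p_\Pi) \subseteq \ker(h_n \circ \iota_\Pi)$, it is enough to show $h_n \circ \iota_\Pi$ descends to a surjection (equivalently, that $\dim \mathrm{im}(h_n \circ \iota_\Pi) \geq \dim \mathbb{C}[NCP(n)_0]$), which forces the induced map on $\mathbb{C}[\Pi(n)_0]/\ker(p_\Pi) = \mathbb{C}[NCP(n)_0]$ to be an isomorphism and hence the kernels to be equal. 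To get surjectivity onto a space of the right dimension, I would argue that $h_n(\iota_\Pi(\pi))$ for $\pi$ noncrossing, composed with the isomorphism $M_n \cong \mathbb{C}[NCM(n)]$ of Proposition~\ref{iso}, is the sum over $m \in M_\pi(n)$ of the noncrossing resolution $p_M(m)$; one then checks that for a noncrossing $\pi$ the matchings $M_\pi(n)$ are themselves noncrossing, so $h_n(\iota_\Pi(\pi))$ maps to $\sum_{m \in M_\pi(n)} m$ with distinct noncrossing $\pi$ producing triangular (hence linearly independent) expansions in the noncrossing matching basis — for instance, order set partitions and matchings so that the "minimal" matching refining $\pi$ appears with coefficient $1$ and no smaller noncrossing partition's matchings interfere. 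This yields $\dim \mathrm{im}(h_n \circ \iota_\Pi) \geq |NCP(n)_0|$, completing the argument.

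\medskip

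\noindent\textbf{Main obstacle.} I expect the genuine difficulty to lie in the linear-independence/triangularity step just sketched: one must produce an explicit ordering on $NCP(n)_0$ and on $NCM(n)$ under which the matrix of $(\pi \mapsto \sum_{m\in M_\pi(n)} m)$ restricted to noncrossing inputs is unitriangular. A natural candidate is to send each noncrossing set partition to its "leftmost" matching — in each block $\{c_1 < c_2 < \dots < c_r\}$ take the pair $\{c_1, c_2\}$ together with recursively matching the rest, or some similar canonical choice — and to check (a) this matching is noncrossing, (b) it is the lexicographically extreme element of $M_\pi(n)$, and (c) the map $\pi \mapsto$ (its canonical matching) is injective on $NCP(n)_0$. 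If a clean canonical choice proves awkward, the fallback is to invoke the representation-theoretic input from the introduction: $\mathbb{C}[NCP(n)_0] \cong \bigoplus_k S^{(k,k,1^{n-2k})}$ appears with multiplicity one inside $\mathbb{C}[NCM(n)]$, so any nonzero $\symm_n$-equivariant map out of $\mathbb{C}[NCP(n)_0]$ whose kernel contains $\ker(p_\Pi)$ must be injective on the quotient — reducing the problem to checking $h_n \circ \iota_\Pi$ is nonzero on each isotypic piece, which is a single explicit computation per $k$.
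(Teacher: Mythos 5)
Your overall strategy is sound and the first containment matches the paper's. For the reverse containment, your primary route (exhibiting a triangular/unitriangular matrix of the map restricted to noncrossing inputs) is genuinely different from what the paper does, and you correctly flag it as the unresolved obstacle — but you do not actually produce the ordering, so that part of the argument is incomplete as written. Your fallback is in fact essentially the paper's argument, and it is much shorter than you anticipate: citing \cite[Proposition~5.2]{Rhoades}, the quotient $\mathbb{C}[\Pi(n)_0]/\ker(p_\Pi) \cong \mathbb{C}[NCP(n)_0]$ is a direct sum of irreducibles, one for each number of blocks $k$; since $h_n \circ \iota_\Pi$ sends the $k$-block piece into the degree-$k$ graded piece of $M_n$, the induced map is injective iff it is nonzero on each irreducible summand; and it is nonzero because for any noncrossing singleton-free $\pi$ the image $h_n(\iota_\Pi(\pi))$ is a sum of elements of $M_\pi(n)$, all of which are \emph{noncrossing} matchings (two pairs from distinct noncrossing blocks cannot cross), each appearing with coefficient $1$ — and noncrossing matchings are a basis of $M_n$ by Proposition~\ref{iso}. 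So no per-$k$ explicit computation and no triangularity is needed.

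One small correction to the fallback as you state it: the fact you want to invoke is not ``$S^{(k,k,1^{n-2k})}$ appears with multiplicity one inside $\mathbb{C}[NCM(n)]$'' (that fact is used in the introduction to argue uniqueness of the embedding $f_n$), but rather that $\mathbb{C}[NCP(n)_0]$ itself is multiplicity-free, so a nonzero equivariant map on each irreducible summand forces injectivity of the induced map on the quotient. In the paper's version the irreducibility of each block-count piece together with the grading of $M_n$ does all the work; multiplicity in the target never enters.
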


\begin{proof}
By Lemma~\ref{kernel}, all such elements lie in the kernel. Since restricting $\mathbb{C}[\Pi(n)_0]$ to noncrossing set partitions with exactly $k$ blocks gives an irreducible submodule \cite[Proposition~5.2]{Rhoades}, and each such irreducible is sent to degree $k$ in $M_n$, it suffices to demonstrate that each irreducible is not sent identically to 0. But for each noncrossing set partition $\pi \in \Pi(n)_0$, $h_n\circ \iota (\pi)$ is a sum of noncrossing matchings of $[n]$, which form a basis for $M_n$ by Proposition~\ref{iso}, and is therefore nonzero.
\end{proof}

We can now prove our main result.
\begin{theorem}
\label{maintheorem}
The linear map $f_n: \mathbb{C}[NCP(n)_0] \rightarrow \mathbb{C}[NCM(n)]$ defined by
\[
f_n(\pi) = \sum_{m \in M_\pi(n) } m
\]
is a $\symm_n$-equivariant embedding of vector spaces. Here $M_\pi(n)$ is defined to be the set of all matchings $m$ in $M(n)$ for which each block of $\pi$ contains exactly one pair in $m$.
\end{theorem}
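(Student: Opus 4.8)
The plan is to read off $f_n$ from the commutative diagram of Figure~\ref{cdasd} by a first-isomorphism-theorem argument (exactly parallel to the proof of Proposition~\ref{iso}), and then to unwind the ring-theoretic construction to recover the explicit formula. By Propositions~\ref{kernelp} and~\ref{hiota}, the kernels of $p_\Pi$ and of $h_n\circ\iota_\Pi$ are spanned by the very same set of elements, namely the singleton-free skein relations $w\circ(s_i\circ\pi+\sigma(\pi))$ with $\pi$ singleton-free and almost noncrossing; since each such element lies in $\mathbb{C}[\Pi(n)_0]$ (the $\circ$-action preserves block sizes, and each of $\pi,\pi_2,\pi_3,\pi_4$ is singleton-free), we get $\ker(h_n\circ\iota_\Pi)=\ker\big(p_\Pi|_{\mathbb{C}[\Pi(n)_0]}\big)$. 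Moreover $p_\Pi$ is a projection fixing $\mathbb{C}[NCP(n)_0]$ pointwise and, again by preservation of block sizes, carries $\mathbb{C}[\Pi(n)_0]$ into $\mathbb{C}[NCP(n)_0]$, so its restriction to $\mathbb{C}[\Pi(n)_0]$ is a surjection onto $\mathbb{C}[NCP(n)_0]$.

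Both $p_\Pi|_{\mathbb{C}[\Pi(n)_0]}$ and $h_n\circ\iota_\Pi$ are $\symm_n$-equivariant with the same kernel, and the former is surjective onto $\mathbb{C}[NCP(n)_0]$. The first isomorphism theorem therefore produces a unique $\symm_n$-equivariant linear map $\overline{f}_n:\mathbb{C}[NCP(n)_0]\to M_n$ with $\overline{f}_n\circ p_\Pi=h_n\circ\iota_\Pi$, and $\overline{f}_n$ is injective. Composing with the inverse of the $\symm_n$-module isomorphism $\mathbb{C}[NCM(n)]\xrightarrow{\ \sim\ }M_n$ of Proposition~\ref{iso} yields an $\symm_n$-equivariant embedding $\mathbb{C}[NCP(n)_0]\hookrightarrow\mathbb{C}[NCM(n)]$; it remains only to check that this embedding is the map $f_n$ of the statement.

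To do so I would evaluate the embedding on a basis element $\pi\in NCP(n)_0$. As $\pi$ is noncrossing, $p_\Pi(\pi)=\pi$, so the embedding sends $\pi$ to the image of $h_n(\iota_\Pi(\pi))$ under $M_n\xrightarrow{\ \sim\ }\mathbb{C}[NCM(n)]$. Now $\iota_\Pi(\pi)$ is the product in $R_n$ of the blocks of $\pi$; the map $g_n$ replaces each block $A$ by $\sum_{\{a,b\}\subseteq A}\{a,b\}$, so expanding the product gives $g_n(\iota_\Pi(\pi))=\sum_{m\in M_\pi(n)}\prod_{\{a,b\}\in m}\{a,b\}$, a sum over all ways of picking one pair from each block. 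The one genuinely combinatorial point is that, because $\pi$ is noncrossing, every $m\in M_\pi(n)$ is itself a noncrossing matching: a crossing $\{a,c\},\{b,d\}$ with $a<b<c<d$ inside such an $m$ would have its two pairs in two distinct blocks of $\pi$, and would thereby exhibit a crossing of $\pi$. Hence each monomial $\prod_{\{a,b\}\in m}\{a,b\}$ equals $q(\iota_M(m))$, the image of the noncrossing matching $m$ under $\mathbb{C}[NCM(n)]\xrightarrow{\ \sim\ }M_n$ of Proposition~\ref{iso}; applying the inverse isomorphism term by term shows the embedding sends $\pi$ to $\sum_{m\in M_\pi(n)}m$, which is the asserted formula (and in particular confirms that this sum lands in $\mathbb{C}[NCM(n)]$). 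Since $f_n$ is defined on a basis by a formula it is automatically a well-defined linear map, and it inherits $\symm_n$-equivariance and injectivity from $\overline{f}_n$, completing the proof.

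The bulk of the difficulty has already been absorbed into Lemma~\ref{kernel} and Proposition~\ref{hiota} — the verification that each $\kappa_n$ is mapped by $h_n$ to a sum of the defining relations of $M_n$, and the irreducibility-plus-nonvanishing argument showing the kernel is no larger than claimed. Granting these, what is left is essentially diagram bookkeeping; the only place a fresh (if small) argument is needed is the observation that picking one pair from each block of a noncrossing set partition cannot create a crossing, which is exactly what makes the ring-theoretic map $h_n\circ\iota_\Pi$ agree with the combinatorial map $f_n$.
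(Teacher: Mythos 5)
Your proposal is correct and follows essentially the same route as the paper: identify $\ker(h_n\circ\iota_\Pi)=\ker(p_\Pi)$ via Propositions~\ref{kernelp} and~\ref{hiota}, invoke the first isomorphism theorem, and compose with the isomorphism of Proposition~\ref{iso}. Your unwinding of the "chasing" step — in particular the explicit observation that picking one pair from each block of a noncrossing $\pi$ cannot create a crossing, so each $m\in M_\pi(n)$ already lies in $NCM(n)$ — is a genuine (if small) detail that the paper leaves implicit, and it tightens the argument.
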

\begin{proof}
By Proposition~\ref{hiota} and Proposition~\ref{kernelp}, the kernel of $h \circ \iota_\Pi$ is equal to the kernel of $p_\Pi$. So we have
\begin{equation}
\mathbb{C}[NCP(n)_0] \cong \mathbb{C}[\Pi_0(n)]/ \mathrm{ker}(p_\Pi) = \mathbb{C}[\Pi(n)_0]/\mathrm{ker}(h\circ \iota_\Pi) \cong \mathrm{im}(h\circ \iota_\Pi) \subset M_n \cong \mathbb{C}[NCM(n)]  
\end{equation}
where the isomorphism on the left is induced by $p_\Pi$ and the isomorphism on the right is induced by $h \circ \iota_\Pi$. Chasing these isomorphisms and inclusions results in the map $f_n$.
\end{proof}

\section{The image}
We have an embedding $f_n: \mathbb{C}[NCP(n)_0]  \hookrightarrow \mathbb{C}[NCM(n)]$, so it is a natural question to ask for a description of the image of $f_n$ within $\mathbb{C}[NCM(n)]$. Via the commutative diagram in Figure 2,we have an isomorphism of images
\begin{equation}
\mathrm{im}(h_n) \cong \mathrm{im}(f_n).
\end{equation}
So it is equivalent to describe the image of $h_n$, and the multiplicative structure of $M_n$ will make describing the image of $h_n$ easier. This section will show that the image of $h_n$ has a simple description, the proof of which will require the following lemmas.

\begin{lemma}
\label{squarecancel}
Let $A \subseteq [n]$. Then $h_n(A)^2 = 0$. 
\end{lemma}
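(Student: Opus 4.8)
The plan is to expand $h_n(A)^2$ and check that every term is annihilated by one of the three families of generators of $I_n$. Since $g_n$ is a $\mathbb{C}$-algebra map and $q$ is the quotient map $A_n \to M_n$, the composite $h_n$ is a $\mathbb{C}$-algebra map, so $h_n(A)^2 = q\bigl(g_n(A)^2\bigr)$, and it suffices to prove $g_n(A)^2 \in I_n$. Writing $g_n(A) = \sum_S S$ with $S$ ranging over all two-element subsets of $A$, commutativity of $A_n$ gives
\[
g_n(A)^2 = \sum_{S = T} S^2 \;+\; \sum_{|S\cap T| = 1} S\cdot T \;+\; \sum_{S\cap T = \emptyset} S\cdot T ,
\]
where in each sum $(S,T)$ ranges over ordered pairs of two-element subsets of $A$ with the indicated intersection pattern.

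The first sum is a sum of elements of the form $\{a,b\}\cdot\{a,b\}$, each lying in $I_n$ by the first family of generators. The second sum consists of products $\{a,b\}\cdot\{a,c\}$ with $a,b,c$ distinct elements of $A$, each in $I_n$ by the second family. For the third sum, each ordered pair $(S,T)$ of disjoint two-element subsets determines a four-element set $F := S \sqcup T \subseteq A$ together with an ordered partition of $F$ into two pairs; conversely, every four-element $F \subseteq A$ has exactly three such partitions, each arising from two ordered pairs, namely $(S,T)$ and $(T,S)$. Grouping the third sum by $F$ therefore yields
\[
\sum_{\substack{F \subseteq A \\ |F| = 4}} 2\bigl( \{w,x\}\{y,z\} + \{w,y\}\{x,z\} + \{w,z\}\{x,y\} \bigr), \qquad F = \{w,x,y,z\},
\]
and each bracketed expression is a generator of the third (Ptolemy) family, so the third sum lies in $I_n$ as well. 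Hence $g_n(A)^2 \in I_n$ and $h_n(A)^2 = 0$.

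I expect no genuine obstacle here: the argument is a finite rearrangement. The only point demanding a little care is the bookkeeping in the disjoint case — confirming the factor of $2$ and that the grouping reproduces exactly the Ptolemy triples $\{w,x\}\{y,z\} + \{w,y\}\{x,z\} + \{w,z\}\{x,y\}$ — but this is routine, and the whole computation could equally well be carried out inside $M_n$ rather than $A_n$. If $|A| \le 1$ then $g_n(A) = 0$ and the claim is trivial, so one may assume $|A|\ge 2$, although the argument above needs no such case split.
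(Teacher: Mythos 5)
Your proof is correct and follows essentially the same approach as the paper's: expand the square, discard the terms with a repeated index via the first two families of relations in $I_n$, and group the remaining terms into Ptolemy triples. The only cosmetic difference is that you verify membership in $I_n$ inside $A_n$ and track the bookkeeping more explicitly (the factor of $2$), while the paper computes directly in $M_n$ with a compensating $\tfrac{1}{3}$; both are equivalent.
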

\begin{proof}
Applying the definition of $h_n$ gives
\begin{equation}
h_n(A)^2 = \sum_{\substack{a,b \in [n] \\ a \neq b}} \sum_{\substack{c,d \in [n] \\ c \neq d}} \{a,b\} \cdot \{c,d\} 
\end{equation}
Using the defining relation of $M_n$ that
\[
\{a,b\} \cdot \{a,c\} = 0
\]
we have
\begin{equation}
\sum_{\substack{a,b \in [n] \\ a \neq b}} \sum_{\substack{c,d \in [n] \\ c \neq d}} \{a,b\} \cdot \{c,d\} =\frac{1}{3} \sum_{\substack{a,b,c,d \in [n]\\ a,b,c,d \,\mathrm{ distinct}}} \{a,b\} \cdot \{c,d\} + \{a,c\} \cdot \{b,d\} + \{a,d\} \cdot \{b,c\}.
\end{equation}
The right hand side of the above equation equals 0 because
\[
\{a,b\} \cdot \{c,d\} + \{a,c\} \cdot \{b,d\} + \{a,d\} \cdot \{b,c\} = 0
\]
for any distinct $a,b,c,d \in [n]$.
\end{proof}
\begin{lemma}
\label{twosetcancel}
Let $A,B$ be disjoint subsets of $[n]$. Then 
\[
h_n(A)\cdot\left(\sum_{\substack{a \in A \\ b\in B}} \{a,b\}\right) = 0.
\]
\end{lemma}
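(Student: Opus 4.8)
The plan is to expand $h_n(A)$ as a sum of pairs $\{a,a'\}$ with $a,a' \in A$, multiply by the given sum of ``bridge'' pairs $\{a,b\}$ with $a \in A$, $b \in B$, and then exhibit an explicit pairing of the resulting monomials that cancels in $M_n$ using the Ptolemy relation $\{x,y\}\cdot\{z,w\} + \{x,z\}\cdot\{y,w\} + \{x,w\}\cdot\{y,z\} = 0$ and the relation $\{x,y\}\cdot\{x,z\}=0$. Concretely, after expansion we obtain
\[
h_n(A)\cdot\left(\sum_{\substack{a\in A\\ b\in B}}\{a,b\}\right) = \sum_{\{a,a'\}\subseteq A}\ \sum_{\substack{c\in A\\ b\in B}} \{a,a'\}\cdot\{c,b\}.
\]
By the relation $\{x,y\}\cdot\{x,z\}=0$, every term in which $c \in \{a,a'\}$ vanishes, so only terms with $a,a',c$ three \emph{distinct} elements of $A$ survive; for such a term, $\{a,a',c,b\}$ is a set of four distinct elements.

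The key step is then to group these surviving monomials according to the underlying $4$-element set $\{a,a',c\}\cup\{b\}$ (with $a,a' \in A$, $c \in A$, $b \in B$ the marked ``$B$-vertex''). For a fixed choice of three distinct elements $x,y,z \in A$ and one element $b \in B$, the monomials contributed are exactly those of the form (pair inside $A$)$\cdot$(pair using $b$): namely $\{x,y\}\cdot\{z,b\}$, $\{x,z\}\cdot\{y,b\}$, and $\{y,z\}\cdot\{x,b\}$ — the three ``$A$-pair times $b$-bridge'' monomials obtained by choosing which of $x,y,z$ is matched to $b$. Up to the symmetry $\{x,y\}\cdot\{z,b\} = \{z,b\}\cdot\{x,y\}$ of the commutative product, this is precisely one full Ptolemy triple
\[
\{x,y\}\cdot\{z,b\} + \{x,z\}\cdot\{y,b\} + \{x,b\}\cdot\{y,z\} = 0
\]
on the four distinct elements $x,y,z,b$. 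Summing over all unordered triples $\{x,y,z\}\subseteq A$ and all $b \in B$ partitions the entire surviving sum into such vanishing triples, so the total is $0$ in $M_n$.

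The main obstacle — really the only thing requiring care — is the bookkeeping that shows the grouping is a genuine partition of the monomials: each surviving monomial $\{a,a'\}\cdot\{c,b\}$ (with $a,a',c$ distinct in $A$, $b\in B$) must be assigned to exactly one triple-plus-$b$ block, with no monomial counted twice and none left over. This follows because from such a monomial one recovers unambiguously the set $\{a,a',c\}\subseteq A$ (the $A$-vertices), the element $b \in B$, and the role of each vertex: the pair not containing $b$ is the ``$A$-pair'' and $c$ is the remaining $A$-vertex matched to $b$. Hence the assignment monomial $\mapsto (\{a,a',c\}, b)$ is well defined, and within a fixed block the three monomials listed above are distinct and exhaust the three ways to single out the $A$-vertex paired with $b$. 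One should also note the degenerate cases cause no trouble: if $|A| \le 2$ then $h_n(A)$ is already $0$ (a singleton is killed by $g_n$, and $h_n(\{a,a'\}) = \{a,a'\}$ whose square-free analysis still applies but the outer sum over $c \in A\setminus\{a,a'\}$ is empty), and if $B = \varnothing$ the statement is vacuous. This completes the argument once the pairing is spelled out.
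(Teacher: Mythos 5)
Your proof is correct and takes essentially the same approach as the paper: expand $h_n(A)$ times the bridge sum, observe that the $\{x,y\}\cdot\{x,z\}=0$ relation kills terms where the bridged $A$-element lies in the $A$-pair, and group the survivors into Ptolemy triples indexed by a $3$-subset of $A$ together with an element of $B$. The paper compresses this into a single displayed equation with a symmetrizing $\tfrac13$ over ordered triples, while you spell out the partition of monomials explicitly; the content is identical, and your careful bookkeeping (including noting that the degenerate cases $|A|\le 2$ or $B=\varnothing$ simply yield an empty collection of triples) is a fine, if slightly more verbose, rendering of the same argument.
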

\begin{proof}
Applying $h_n$ gives
\[
h_n(A)\cdot\left(\sum_{\substack{a \in A \\ b\in B}} \{a,b\}\right) = \frac{1}{3}\sum_{a_1, a_2, a_3 \in A} \sum_{b \in B} \{a_1, a_2\} \cdot \{a_3, b\} + \{a_1,a_3\} \cdot \{a_2, b\} + \{a_2, a_3\} \cdot \{a_1,b\} = 0
\]
\end{proof}

\begin{lemma}
\label{lemma}
Let $B_1, \dots, B_k$ be the blocks of a singleton free set partition of $[n]$. Then
\[
h_n\left(\prod_{i=1}^k B_k\right) = h_n\left([n]\cdot \prod_{i=1}^{k-1} B_k\right)
\]
\end{lemma}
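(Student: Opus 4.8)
The plan is to exploit that $h_n$ is a morphism of commutative $\CC$-algebras, so that the asserted identity reduces to a single equation in $M_n$ which can then be dispatched using Lemmas~\ref{squarecancel} and~\ref{twosetcancel}.

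First I would rewrite both sides. Since $h_n = q\circ g_n$ is a ring homomorphism, $h_n\big(\prod_{i=1}^k B_i\big) = \prod_{i=1}^k h_n(B_i)$ and $h_n\big([n]\cdot\prod_{i=1}^{k-1}B_i\big) = h_n([n])\cdot\prod_{i=1}^{k-1}h_n(B_i)$, where all products are taken in $M_n$. Hence the claim is equivalent to
\[
\Big(\prod_{i=1}^{k-1}h_n(B_i)\Big)\big(h_n([n])-h_n(B_k)\big)=0 .
\]
(If some block were a singleton the corresponding $h_n(B_i)$ would be $0$ and both sides vanish, but singleton-freeness is assumed anyway, and the case $k=1$ is immediate.)

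Next I would expand the difference. Using $h_n(A)=\sum_{\{a,b\}\subseteq A}\{a,b\}$ in $M_n$ and writing $C:=B_1\cup\cdots\cup B_{k-1}=[n]\setminus B_k$, an unordered pair $\{a,b\}\subseteq[n]$ fails to be contained in $B_k$ exactly when it has both endpoints in $C$ or one endpoint in $B_k$ and one in $C$, so
\[
h_n([n])-h_n(B_k)=\sum_{\{a,b\}\subseteq C}\{a,b\}+\sum_{i=1}^{k-1}\sum_{\substack{a\in B_i\\ b\in B_k}}\{a,b\}.
\]
Then I would annihilate each piece after multiplying by $\prod_{i=1}^{k-1}h_n(B_i)$, using commutativity of $M_n$ to pull out whichever factor is needed. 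For the cross term indexed by $i$, Lemma~\ref{twosetcancel} applied to the disjoint sets $A=B_i$, $B=B_k$ gives $h_n(B_i)\cdot\sum_{a\in B_i,\,b\in B_k}\{a,b\}=0$, and $h_n(B_i)$ occurs as a factor of $\prod_{i=1}^{k-1}h_n(B_i)$, so that summand dies. For the term $\sum_{\{a,b\}\subseteq C}\{a,b\}$ I would split a pair in $C$ according to whether it lies inside a single block $B_i$ or straddles two blocks $B_i,B_j$ with $i<j\le k-1$, writing it as $\sum_{i=1}^{k-1}h_n(B_i)+\sum_{i<j}\sum_{a\in B_i,\,b\in B_j}\{a,b\}$; multiplying the within-block part by $\prod_i h_n(B_i)$ produces a factor $h_n(B_i)^2$, which is $0$ by Lemma~\ref{squarecancel}, while multiplying the straddling part lets Lemma~\ref{twosetcancel} (with $A=B_i$, $B=B_j$) kill each summand. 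Every term thus vanishes, establishing the displayed identity and hence the lemma.

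I do not expect a genuine obstacle: once the statement is turned into the single product identity in the commutative ring $M_n$, everything is a direct application of the two preceding lemmas. The only point needing care is the clean combinatorial bookkeeping — decomposing $h_n([n])-h_n(B_k)$ into precisely the pieces that each of Lemmas~\ref{squarecancel} and~\ref{twosetcancel} is built to annihilate, and using commutativity of $M_n$ to factor the relevant $h_n(B_i)$ out of $\prod_{i=1}^{k-1}h_n(B_i)$.
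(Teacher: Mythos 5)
Your proof is correct and follows essentially the same route as the paper: both decompose $h_n([n])$ (you subtract $h_n(B_k)$ first, the paper leaves it in and observes it is the only surviving term) into within-block and between-block pair sums, then kill the within-block contributions with Lemma~\ref{squarecancel} and the between-block contributions with Lemma~\ref{twosetcancel} after multiplying by $\prod_{i=1}^{k-1}h_n(B_i)$.
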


\begin{proof}
We have the following calculation:
\begin{align*}
h_n\left([n] \cdot \prod_{i=1}^{k-1} B_i\right) &= \left( \sum_{\substack{a,b \in [n] \\ a\neq b}} \{a,b\}\right) \cdot  h_n\left( \prod_{i=1}^{k-1} B_i\right) \\ &= \left(\left(\sum_{i=1}^k h_n(B_i) \right) + \left(\sum_{1\leq i < j \leq k} \sum_{\substack{a \in B_i \\ b \in B_j}} \{a,b\}\right)\right) \cdot h_n\left(\cdot \prod_{i=1}^{k-1} B_k\right) \\ &= h_n(B_i) \cdot \left(\cdot \prod_{i=1}^{k-1} h_n(B_i)\right)
\end{align*}
where the last line follows by the preceeding two lemmas, as every term in the outer sum of
\[
\sum_{1\leq i < j \leq k} \sum_{\substack{a \in B_i \\ b \in B_j}} \{a,b\}
\]
is annihilated by some term in the product
\[
\prod_{i=1}^{k-1} h_n(B_k)
\]
and every term except the $i=k$ term in the sum
\[
\sum_{i=1}^k h_n(B_i)
\]
is annihilated by some term in the product
\[
\prod_{i=1}^{k-1} h_n(B_k).
\]
\end{proof}

We can now describe the image of $h_n$.

\begin{theorem}
 Let $H_n$ be the ideal of $M_n$ generated by $h_n([n])$. Then
\[
\mathrm{im}(h_n) = H_n.
\]
\end{theorem}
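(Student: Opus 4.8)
The plan is to prove the two inclusions $\mathrm{im}(h_n) \subseteq H_n$ and $H_n \subseteq \mathrm{im}(h_n)$ separately, using Lemma~\ref{lemma} as the main engine.

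For the inclusion $\mathrm{im}(h_n) \subseteq H_n$: since $h_n$ is a $\mathbb{C}$-algebra map and $R_n$ is generated by subsets of $[n]$, the image $\mathrm{im}(h_n)$ is spanned (as a vector space) by products $h_n(A_1) \cdots h_n(A_m)$ where each $A_j \subseteq [n]$ is a nonempty subset. Singletons are killed by $h_n$, so we may assume each $A_j$ has size at least two. Such a product is $h_n(\pi)$ where $\pi$ is the multiset collection $\{A_1, \dots, A_m\}$. The key point is that Lemma~\ref{lemma} was stated for set partitions, but I would first reduce to that case: if two of the $A_j$'s overlap, or if they don't cover all of $[n]$, I still want to rewrite the product. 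Actually the cleanest route is to observe that Lemma~\ref{squarecancel} and Lemma~\ref{twosetcancel} already show that any product $h_n(A_1)\cdots h_n(A_m)$ with $m \geq 1$ either vanishes or can be handled; more directly, I would argue that it suffices to treat products $h_n(A_1)\cdots h_n(A_m)$ and apply Lemma~\ref{lemma}-style reasoning (with $[n]$ in place of an arbitrary ambient set) to replace one factor by $h_n([n])$. Concretely: for any nonempty $A \subseteq [n]$ with $|A| \geq 2$, the element $h_n([n]) - h_n(A) = \sum_{\{a,b\} \subseteq [n], \{a,b\} \not\subseteq A} \{a,b\}$ can be split as $\sum_{\{a,b\}\subseteq [n]\setminus A}\{a,b\} + \sum_{a \in A, b \in [n]\setminus A}\{a,b\}$, and multiplying by $h_n(A)$ kills the second sum by Lemma~\ref{twosetcancel} and... the first sum is $h_n([n]\setminus A)$, which need not vanish against $h_n(A)$ in general. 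So the honest statement is that one must use the full product. I would therefore prove: \emph{for any nonempty subsets $A_1, \dots, A_m$ of $[n]$ with $m \geq 1$, the product $h_n(A_1)\cdots h_n(A_m)$ lies in $H_n$}, by showing it equals $h_n([n]) \cdot h_n(A_2) \cdots h_n(A_m)$ in $M_n$ plus an error term that vanishes. Expanding $h_n([n]) = h_n(A_1) + (\text{pairs meeting } [n]\setminus A_1)$ and observing that every "pair meeting $[n]\setminus A_1$" term is annihilated by a suitable factor among $h_n(A_2), \dots, h_n(A_m)$ when the $A_j$ genuinely come from a set partition — and in general by first refining to a set partition. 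This is exactly the mechanism of Lemma~\ref{lemma}, and I expect the bookkeeping here — making sure every stray pair $\{a,b\}$ with $a \notin A_1$ is killed — to be the main obstacle; it may require the additional observation that an arbitrary collection of subsets can be assumed to be a set partition because $h_n$ applied to a non-partition collection either vanishes (overlapping blocks, via Lemma~\ref{squarecancel}) or can be padded with singletons (which vanish) to cover $[n]$.

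For the reverse inclusion $H_n \subseteq \mathrm{im}(h_n)$: $H_n$ is the ideal of $M_n$ generated by $h_n([n])$, so it is spanned by elements $x \cdot h_n([n])$ with $x \in M_n$. Since $M_n = A_n/I_n$ and $A_n$ is generated by size-two subsets, $x$ is a $\mathbb{C}$-linear combination of products of classes $q(\{a,b\})$. Now I use Lemma~\ref{lemma}: take the set partition of $[n]$ with a single block, namely $\pi = \{[n]\}$ (assuming $n \geq 2$), so that $h_n(\pi) = h_n([n]) \in \mathrm{im}(h_n)$; more usefully, for any single pair $\{a,b\}$, consider the set partition whose blocks are $\{a,b\}$ together with... no, I want $q(\{a,b\}) \cdot h_n([n])$ to be in the image. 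By Lemma~\ref{lemma} applied to the two-block (or suitably refined) partition, $h_n([n] \cdot \{a,b\}) = h_n(B \cdot \{a,b\})$ for appropriate $B$, and iterating, any product $q(\{a_1,b_1\})\cdots q(\{a_r,b_r\}) \cdot h_n([n])$ equals $h_n$ of a corresponding collection, hence lies in $\mathrm{im}(h_n)$. So $H_n \subseteq \mathrm{im}(h_n)$ follows by linearity. The cleanest phrasing: every generator $\{a,b\}$ of $A_n$ satisfies $q(\{a,b\}) \cdot h_n([n]) = h_n(\{a,b\} \cdot C)$ for a suitable subset $C$ (e.g. $C = [n]$ refined, using Lemma~\ref{lemma}), and since $\mathrm{im}(h_n)$ is an ideal-like set closed under multiplication by such images — actually $\mathrm{im}(h_n)$ is a subalgebra, and the first inclusion already shows it contains products with $h_n([n])$ — we get $H_n \subseteq \mathrm{im}(h_n)$.

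To summarize the key steps in order: (1) note $\mathrm{im}(h_n)$ is spanned by $h_n$ of collections of subsets of size $\geq 2$, and reduce such collections to set partitions of $[n]$ using Lemmas~\ref{squarecancel} and~\ref{twosetcancel} (overlapping or non-covering cases); (2) apply Lemma~\ref{lemma} repeatedly to rewrite $h_n$ of a set partition with blocks $B_1, \dots, B_k$ as $h_n([n] \cdot B_1 \cdots B_{k-1})$, i.e. with a factor of $h_n([n])$, placing it in $H_n$; this gives $\mathrm{im}(h_n) \subseteq H_n$; (3) for the reverse, show each spanning element $x \cdot h_n([n])$ of $H_n$ is itself in $\mathrm{im}(h_n)$, again via Lemma~\ref{lemma} which lets us absorb the $h_n([n])$ factor into an $h_n$-image; (4) conclude $\mathrm{im}(h_n) = H_n$. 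The main obstacle is step (2)'s bookkeeping: verifying that when we expand $h_n([n])$ against the product $\prod h_n(B_i)$, every unwanted pair is annihilated — this is precisely where the disjointness of the blocks of a set partition is essential, and it explains why the reduction in step (1) must come first.
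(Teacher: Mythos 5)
The statement ``$\mathrm{im}(h_n)=H_n$'' must be read as asserting that the ideal $H_n$ coincides with the image of $h_n\circ\iota_\Pi$ (equivalently, the copy of $\mathrm{im}(f_n)$ inside $M_n$), not the image of the algebra homomorphism $h_n\colon R_n\to M_n$. The latter reading makes the theorem false: $h_n(\{1,2\})=\{1,2\}$ lies in the full algebra image but is not in $H_n$ for $n\geq 3$, since the degree-one part of $H_n$ is the single line $\CC\, h_n([n])$. This matters for your forward inclusion: you set out to show that \emph{every} product $h_n(A_1)\cdots h_n(A_m)$ lies in $H_n$, and you yourself run into trouble (``the first sum is $h_n([n]\setminus A)$, which need not vanish''). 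The difficulty is not a bookkeeping obstacle to be overcome; the general claim is simply not true, as the example $h_n(\{1,2\})$ shows. Once $\mathrm{im}(h_n)$ is read correctly, the only products that arise are $h_n(B_1)\cdots h_n(B_k)$ with $B_1,\dots,B_k$ the blocks of a singleton-free set partition of $[n]$, and the forward inclusion is then immediate from Lemma~\ref{lemma}: $h_n(B_1\cdots B_k)=h_n([n])\cdot h_n(B_1)\cdots h_n(B_{k-1})\in H_n$. (Your later observation about non-covering collections being ``padded with singletons which vanish'' is also wrong as stated: padding with singletons makes the product zero, it does not turn it into a set partition image, and $h_4(\{1,2\})=\{1,2\}\neq 0$ is a concrete non-covering product that doesn't vanish.)

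Your reverse inclusion is where the proposal genuinely diverges from the paper. The paper establishes $H_n\subseteq\mathrm{im}(h_n)$ indirectly by a dimension count: it exhibits a spanning set of $H_n$ indexed by noncrossing matchings of $\{2,\dots,n\}$, produces linear relations $h_n([n])\cdot\tilde f_n(\tilde\pi)=0$ indexed by singleton-free noncrossing set partitions of $\{2,\dots,n\}$, verifies these relations are independent, and then uses a pair of bijections to conclude $\dim H_n\leq\#NCP(n)_0=\dim\mathrm{im}(h_n)$, which combined with the forward inclusion forces equality. Your proposal instead attempts a direct rewriting: express a spanning element $q(\{a_1,b_1\})\cdots q(\{a_r,b_r\})\cdot h_n([n])$ of $H_n$ as $h_n$ of a singleton-free set partition by applying Lemma~\ref{lemma} to the partition whose blocks are the pairs $\{a_i,b_i\}$ together with the complement $C=[n]\setminus\bigcup_i\{a_i,b_i\}$. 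This idea is sound and, if completed, gives a cleaner and more elementary proof. But the writeup skips the casework that makes it go through: (i) if the pairs are not pairwise disjoint, the monomial is already zero in $M_n$ by the relations $\{a,b\}^2=0$ and $\{a,b\}\{a,c\}=0$; (ii) if $|C|\geq 2$, the pairs together with $C$ do form a singleton-free set partition and Lemma~\ref{lemma} applies as you intend; (iii) if $|C|\leq 1$, Lemma~\ref{lemma} does not apply (the partition would have a singleton or the pairs already cover $[n]$), but then \emph{every} pair $\{a,b\}\subseteq[n]$ shares an element with some $\{a_i,b_i\}$, so $h_n([n])\cdot q(\{a_1,b_1\})\cdots q(\{a_r,b_r\})=0$ and there is nothing to prove. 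You neither reduce to disjoint pairs nor handle the small-complement case, and the phrase ``for appropriate $B$'' conceals exactly the place where Lemma~\ref{lemma}'s singleton-free hypothesis must be checked. With those gaps filled, your direct approach works and in fact avoids the paper's somewhat delicate counting argument; without them, the reverse inclusion is not established.
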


\begin{proof}
It is immediate from Lemma~\ref{lemma} that the image of $h_n$ is contained in $H_n$, so it suffices to show that
\begin{equation}
\mathrm{dim}(H_n) \leq \mathrm{dim}(\mathrm{im}(h_n)) = \mathrm{dim}(\mathrm{im}(f_n)) = \#NCP(n)_0
\end{equation}
To bound the dimension of $H_n$, note that for any fixed $a \in [n]$, 
\[
h_n([n])\cdot\left( \sum_{\substack{b \in [n] \\b \neq a}} \{a,b\}\right) = \frac{1}{3} \sum_{\substack{b \in [n] \\b \neq a}}\sum_{\substack{c \in [n] \\c \neq a}}\sum_{\substack{d \in [n] \\d \neq a}} (\{a,b\}\cdot\{c,d\} + \{a,c\}\cdot \{b,d\} + \{a,d\}\cdot\{b,c\}) = 0
\]
so
\[
h_n([n])\cdot \{1,a\} = -h_n([n])\cdot \left( \sum_{\substack{b \in [n] \\b \neq a,1}} \{a,b\} \right).
\]
Therefore $H_n$ is spanned by elements of the form
\[
h_n([n]) \cdot \{a_1, a_2\} \cdot \{a_3, a_4\} \cdots \{a_{2k-1}, a_{2k}\}
\]
where the sets $\{a_1, a_2\}, \dots, \{a_{2k-1}, a_{2k}\}$ form a noncrossing matching of $\{2,\dots, n\}$. These elements are not linearly independent, however. Consider the element $\tilde{f}_n(\tilde{\pi})$ of $M_n$ given by 
\[
\tilde{f}_n(\tilde{\pi}) := \prod_{B \in \tilde{\pi}} h_n(B)
\]
 for any singleton free noncrossing set partition $\tilde{\pi}$ of $\{2, \dots, n\}$ (i.e. the ``image of $f_n$" if such a set partition were in the domain of $f_n$). Let $B_1$ be the block of $\tilde{\pi}$ containing $2$, and let $\pi$ be the set partition of $[n]$ obtained by adding $1$ to block $B_1$. We have
\begin{align*}
h_n([n]) \cdot \tilde{f}_n(\tilde{\pi}) &= h_n([n])\cdot \prod_{B \in \tilde{\pi}} h_n(B) \\&= h_n(B_1) \cdot h_n \left([n] \cdot \prod_{B\neq B_1 \in \pi} B\right) \\&= h_n(B_1) \cdot h_n\left(\prod_{B\in \pi} B\right) \\&= h_n(B_1) h_n(B_1 \cup \{1\}) h_n \left(\prod_{B\neq B_1 \in \pi} B\right)\\&=0
\end{align*}
The third equality follows from Lemma~\ref{lemma} and the final equality follows from the fact that
\[
h_n(B_1) h_n(B_1 \cup \{1\}) = h_n(B_1)^2 + h_n(B_1) \left( \sum_{ b \in B_1} \{1,b\} \right) = 0
\]
which follows from Lemma~\ref{twosetcancel} and Lemma~\ref{squarecancel}.
The collection of $\tilde{f}_n(\tilde{\pi})$ for singleton-free noncrossing set partitions $\pi$ of $\{2, \dots, n\}$ is linearly independent. To see this, note that any linear relation among the $\tilde{f}_n(\tilde{\pi})$ would also be a linear relation among $f_{n-1}(\pi)$ where $\pi$ is the set partition of $[n-1]$ obtained by decrementing the indices in $\tilde{\pi}$. But $f_{n-1}$ is an embedding and singleton-free noncrossing set partitions are linearly independent in $\mathbb{C}[NCP(n-1)_0]$. The dimension of $H_n$ is therefore bounded by
\begin{multline}
\mathrm{dim} (H_n) \leq \# \{ \textrm{noncrossing matchings of }\{2,\dots, n\} \} \\- \# \{ \textrm{singleton-free noncrossing set partitions of }\{2,\dots, n\} \}.
\end{multline}
Noncrossing matchings of $\{2, \dots, n\}$ are in bijection with noncrossing set partitions of $[n]$ in which only the block containing $1$ may be a singleton (though it may be larger). Given a noncrossing set partition, simply take the matching that matches the largest and smallest element of each block not containing $1$. Sinlgeton-free noncrossing set partitions of $\{2, \dots, n\}$ are in bijection with set partitions of $[n]$ in which $\{1\}$ is the unique singleton block. We therefore have
\[
\mathrm{dim}(H_n) \leq\#NCP(n)_0
\]
 as desired.
\end{proof}
\section{Future directions}
One of the goals motivating this paper is to find new combinatorially nice bases for $\symm_n$-irreducibles which arise from existing bases in an analogous way to the skein action. More specifically, suppose we have a basis for $S^{\lambda}$ which is indexed by certain structures on the set $[k]$, where $k = |\lambda|$ (e.g. noncrossing perfect matchings, in the case of this paper).  We can create a basis for the induction product of  $S^{\lambda}$ with a sign representation of $\symm_{n-k}$ indexed by all ways to put a certain structure on a $k$-element subset of $[n]$. The Pieri rule tells us which $\symm_n$ irreducibles this decomposes into. In particular, there will be one copy of $(\lambda, 1^{n-k})$. How do we isolate that irreducible?

It is optimistic to think that there will be a method that works in any sort of generality, but perhaps analogs could be found in certain specific cases. For example, an analog might exist for the $A_2$-web basis for $S^{(k,k,k)}$ introduced by Kuperberg \cite{k}. The web basis consists of planar bipartite graphs embedded in a disk with $n$ boundary vertices all of degree 1, interior vertices are degree 3, all boundary vertices are part of the same bipartition, and no cycles of length less than 6 exist. One potential candidate for a basis for $S^{(k,k,k,1^{n-3k})}$ is as follows.

\begin{conjecture}
Let $A$ be the set of all planar bipartite graphs embedded in a disk for which the following conditions hold
\begin{itemize}
\item There are $n$ vertices on the boundary of the disk, and there exists a bipartition in which all of these vertices are in the same part.
\item Every interior vertex is connected to a boundary vertex
\item Every interior vertex in the same bipartition as the boundary vertices is degree 3. These are called negative interior vertices
\item Every interior vertex not in the same bipartition as the boundary vertices is degree at least 3. These are called positive interior vertices.
\item The number of positive interior vertices minus the number of negative interior vertices is exactly $k$.
\item No cycles of length less than 6 exist.
\end{itemize}
Then $|A|$ is equal to the dimension of $S^{(k,k,k,1^{n-3k})}$.
\end{conjecture}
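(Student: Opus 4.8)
A plausible route to this conjecture is to reprise the strategy of Section~3 in the $A_2$ setting. The base case $n = 3k$ should follow from the classical fact that Kuperberg's non-elliptic $A_2$ webs on $3k$ boundary points of a single color form a basis of the $SL_3$-invariant space $\mathrm{Inv}(V^{\otimes 3k})$, which as an $\symm_{3k}$-module is $S^{(k,k,k)}$ (Kuperberg \cite{k}; the tableau dictionary is due to Khovanov--Kuperberg and the $\symm_{3k}$-module interpretation to Petersen--Pylyavskyy--Rhoades). For general $n$, the object playing the role that $\mathbb{C}[NCM(n)]$ played for matchings should be the span $W(n,k)$ of all configurations consisting of a non-elliptic $A_2$ web on a $3k$-element subset $S \subseteq [n]$ together with an ordering — that is, a sign — on $[n] \setminus S$; by the dual Pieri rule $W(n,k)$ carries $S^{(k,k,k)} \circ \mathrm{sign}_{\symm_{n-3k}}$, in which $S^{(k,k,k,1^{n-3k})}$ occurs with multiplicity one, and the content of the conjecture is that $\mathbb{C}[A]$ realizes exactly that summand.

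The map to construct, mirroring $f_n$, would send a generalized web $G \in A$ to the signed sum of all honest trivalent web-with-tail configurations obtained by resolving each positive interior vertex of degree $3+r$ into a trivalent fragment carrying $r$ new univalent legs, those legs being routed to $r$ of the otherwise-free boundary vertices, over all planar choices; when every positive vertex already has degree $3$ this should be a single term, just as $f_n$ is the identity on perfect noncrossing matchings. To make this precise and to compute its kernel I would build graded commutative $\symm_n$-algebras $R_n' \supseteq A_n' \twoheadrightarrow M_n'$ as in Section~3 but generated by elementary web pieces, with $M_n'$ the quotient of $A_n'$ by the $A_2$ spider relations — the bigon and square reductions together with the crossing resolution — so that $M_n'$ is an algebra model for $W(n,k)$ modulo web equivalence; the composite $h_n' = q \circ g_n'$ is then the algebraic avatar of the resolution map, and one would prove the analog of Lemma~\ref{kernel}, namely that each ``generalized skein relation'' among elements of $A$ becomes, under $h_n'$, an explicit sum of the defining relations of $M_n'$. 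Given a spanning set for the kernel of the web resolution — the $A_2$ analog of Propositions~\ref{kernelm}--\ref{kernelp} — the first isomorphism theorem would produce the embedding $\mathbb{C}[A] \hookrightarrow W(n,k)$ and hence $|A| = \dim S^{(k,k,k,1^{n-3k})}$, exactly as in the proof of Theorem~\ref{maintheorem}.

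The principal obstacle is that, in contrast to the matching case where everything rests on the single Ptolemy relation, there is at present no $\symm_n$-skein action on generalized $A_2$ webs in the literature. So the first and hardest task is to \emph{define} such an action — to guess the correct $A_2$ generalization of Rhoades' three skein relations — and to prove it is well defined, in particular that it descends through the braid and commutation relations and that the resolution onto non-elliptic webs has kernel spanned by these relations; only with that structural input in hand does the $R_n'/A_n'/M_n'$ bookkeeping have content. Since the $A_2$ spider has genuinely more relations than the $SL_2$ one, establishing well-definedness and identifying this kernel is where I expect the real work to be.

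As a check logically independent of the module picture, I would also attempt the bare count $|A| = \dim S^{(k,k,k,1^{n-3k})}$ directly: either via a deletion recursion on the largest boundary vertex $n$, splitting $A$ according to whether $n$ is a univalent tail absorbed by a positive interior vertex or lies on the trivalent core, and matching it against the branching rule for $S^{(k,k,k,1^{n-3k})}$; or by extending the Khovanov--Kuperberg bijection between non-elliptic webs and dominant lattice words to a bijection between $A$ and standard Young tableaux of shape $(k,k,k,1^{n-3k})$, with the length-$(n-3k)$ column recording the extra tails. A direct count of this kind would settle the conjecture as stated and would be strong evidence that the representation-theoretic program above is the right one.
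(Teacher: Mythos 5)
The statement you were given is labeled a \emph{conjecture} in the paper and is not proved there: the author notes only that it ``can be shown to hold for $k=2$ and any $n$, as well as $n=10, k=3$ via direct enumeration,'' and the Question that immediately follows asks whether a combinatorially nice $\symm_n$-action on $\mathbb{C}[A]$ exists at all. So there is no paper proof to compare against, and your response --- declining to claim a proof and instead laying out a program --- is the honest and appropriate one. Your program is well-conceived and closely tracks what the paper itself implicitly suggests: building $R_n'/A_n'/M_n'$ in parallel with Section 3, defining an $A_2$ analog of the resolution map $h_n$, and invoking the dual Pieri rule to locate $S^{(k,k,k,1^{n-3k})}$ as a multiplicity-one summand of $S^{(k,k,k)} \circ \mathrm{sign}_{\symm_{n-3k}}$ are exactly the right moves, and that last identification is correct.

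As a proof, however, the proposal does not close the conjecture, and you diagnose the obstruction yourself: the entire $SL_2$ argument rests on already having a known skein action, an explicit $\symm_n$-equivariant projection, and a spanning set for its kernel (the paper's Propositions 2.1 and 2.2), and no $A_2$ analog of any of these is available. Without them the first-isomorphism-theorem step in the style of Theorem 3.7 has no content. The only portion of your proposal that could settle the conjecture \emph{as stated} --- a purely enumerative statement --- is the final paragraph's suggestion of a deletion recursion or a tableau bijection extending Khovanov--Kuperberg, but that too is only sketched; note that a deletion recursion on vertex $n$ is delicate here because a boundary tail may attach to a positive interior vertex of arbitrarily large degree, so removing $n$ does not simply pass from $A$ at level $n$ to $A$ at level $n-1$ in one clean case split. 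In the $SL_2$ case the analogous count was already in hand (it is essentially the bijection at the end of Section 4 between noncrossing matchings of $\{2,\dots,n\}$ and certain noncrossing set partitions of $[n]$); here that count is precisely what is open. So the proposal is a sound roadmap, consistent with the paper's stated research direction, but it neither proves the conjecture nor reduces it to anything already established.
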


The set $A$ can be thought of as consisting of webs for which the condition of interior vertices being degree 3 has been partially relaxed. The conjecture can be shown to hold for $k=2$ and any $n$, as well as $n=10, k=3$ via direct enumeration. If the above conjecture is true, it suggests the following question.

\begin{question}
Does there exist a combinatorially nice action of $\symm_n$ on $\mathbb{C}[A]$ which creates a $\symm_n$ module isomorphic to $S^{(k,k,k,1^{n-3k})}$? If so, what does the unique embedding into $S^{(k,k,k)}$ induced with a sign representation of $\symm_{n-3k}$ look like?
\end{question}

A positive answer to this question might help elucidate how to apply similar methods more generally.

\section{Acknowledgements}
We are very grateful to Brendon Rhoades for many helpful discussions and comments on this project.

\end{document}